\setlist{noitemsep, topsep=0pt}
\newtheorem{theorem}                                {Theorem}
\newtheorem{lemma}                    {Lemma}
\theoremstyle{definition}
\newtheorem*{conjecture}              {Conjecture}
\theoremstyle{remark}
\DeclareFontFamily{OT1}{pzc}{}
\DeclareFontShape{OT1}{pzc}{m}{it}{<-> s * [1.200] pzcmi7t}{}
\DeclareMathAlphabet{\mathscr}{OT1}{pzc}{m}{it}
\newcommand{\CN}{\ensuremath{N}} 
\newcommand{\CE}{\ensuremath{\mathscr E}} 
\newcommand{\CB}{\ensuremath{\mathscr B}}
\newcommand{\PP}{\mathcal{P}}
\newcommand{\N}{\mathbb{N}}
\newcommand{\R}{\mathbb{R}}
\begin{document}
\title{Isoperimetric Inequalities on Hexagonal Grids }
\author{ Berit Gru\ss ien\thanks{
Humboldt-Universität zu Berlin, 
Work by Gru\ss ien was supported by the Deutsche Forschungsgemeinschaft (DFG)
within the research training group "Methods for Discrete Structures"
(GrK 1408).
}
}
\maketitle

\begin{abstract}
\begin{quote}
We consider the edge- and vertex-isoperimetric probem on finite and infinite hexagonal grids:
For a subset $W$ of the hexagonal grid of given cardinality, 
we give a lower bound for the number of edges between $W$ and its complement, 
and lower bounds for the number of vertices in the neighborhood of $W$ and for the number of vertices in the boundary of $W$.
For the infinite hexagonal grid the given bounds are tight.
\end{quote}
\end{abstract}

\section{Introduction}
Let us consider sets of points in the continuous plane. 
To each set $W$ of points we can assign its area~$a(W)$ and its perimeter~$p(W)$ which is the boundary of set $W\!$. 
Then, the isoperimetric problem is to 
consider all possible sets of points with a special fixed area~$a_0$, 
and determine the minimum length of boundary 
a set with area $a_0$ can have.
In the continuous plane the answer to this problem is already known. The set with minimum boundary for fixed area $a_0$
has always the shape of a disk, and the following inequality holds for all sets $W$:
$$p(W)\geq 2\sqrt{\pi}\cdot \sqrt{a(W)}.$$
Inequalities of this form are called isoperimetric inequalities.

Now, we can look at the same problem in a discrete setting.
Given a graph $G=(V,E)$ and a finite subset $W$ of vertices of $G$ we define the area of $W$
as the number $|W|$ of vertices in $W$.
For the perimeter of $W$, there are different notions.
Common notions use the number of neighbor vertices or the number of boundary vertices for the perimeter.
It is also reasonable to consider the number of outgoing edges
to measure the perimeter.

Figure \ref{fig:perimetermeas} illustrates these different notions.
\begin{figure}[ht]
\centering
\begin{minipage}{\textwidth}
\subfloat[Subset $W$ of vertices]{\includegraphics[totalheight=2.3cm]{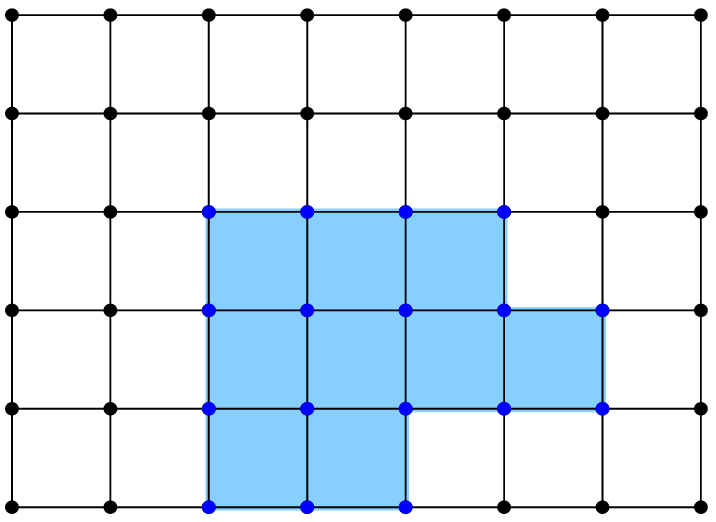}}\hfill
\subfloat[Neighbor vertices]{\includegraphics[totalheight=2.3cm]{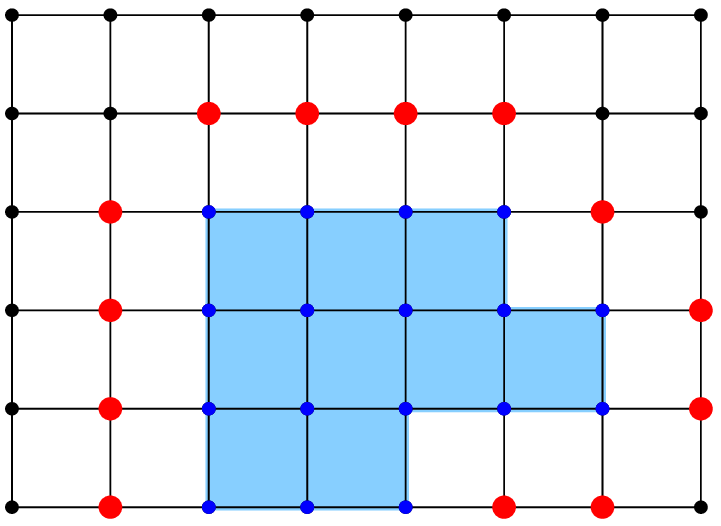}}\hfill
\subfloat[Boundary vertices]{\includegraphics[totalheight=2.3cm]{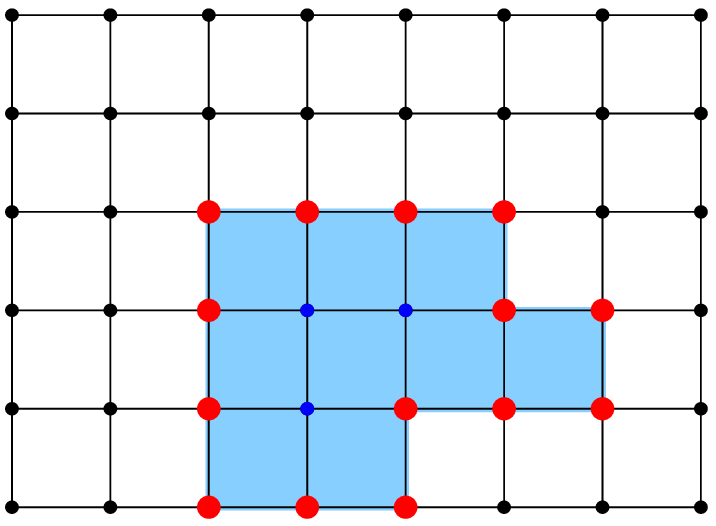}}\hfill
\subfloat[Outgoing edges]{\includegraphics[totalheight=2.3cm]{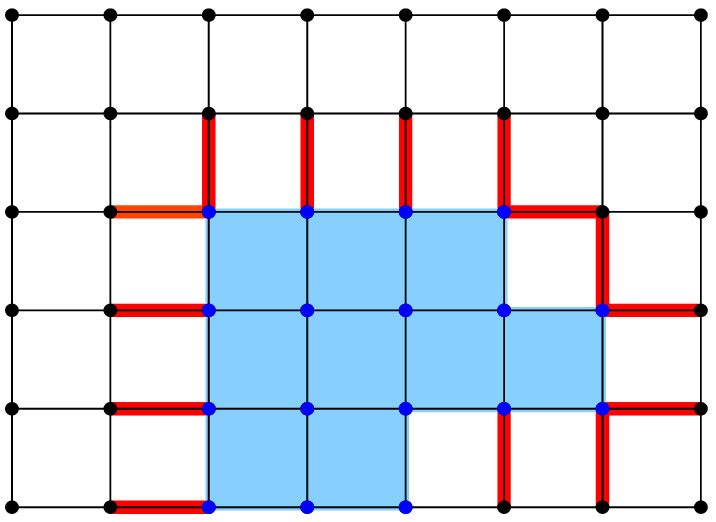}}
\end{minipage}
\caption{Different perimeter measures}
\label{fig:perimetermeas}
\end{figure}

Depending on the graphs we consider, each notion might define a different isoperimetric problem.
The isoperimetric problem is called (neighbor or boundary) vertex-isoperimetric problem if one of the first two notion is used for the perimeter,
and edge-isoperimetric problem if the last notion is used.

One often considers isoperimetric problems and searches for isoperimetric inequalities on graphs that have a uniform structure.
One of the first and most important results 
is that of Harper~\cite{harper-vertex} in 1966 who solved the neighbor vertex-isoperimetric problem 
on the discrete cube $\{0,1\}^n=\PP(\{1,\dots,n\})$, 
where two sets $A$ and $B$ are joined by an edge if $|A\bigtriangleup B|=1$.
A solution for the edge-isoperimetric problem on the discrete cube was given by 
Harper~\cite{harper-edge}, Lindsey~\cite{lindsey}, Bernstein~\cite{bernstein} and Hart~\cite{hart}.
These results were extended to finite grids $[k]^n$ by  Bollob\'as and Leader~\cite{bollobasleader-vertex,bollobasleader-edge}.
Wang and Wang provided a solution 
for the neighbor vertex-isoperimetric problem on the infinite grid~$\mathbb{Z}^n$~\cite{wangwang-discrete,wangwang-extremal}.
The last result we want to mention is that of Gravier 
who solved the  (boundary) vertex-isoperimetric problem on the infinite triangular grid~\cite{gravier}.

In this article, we consider infinite and finite hexagonal grids and give isoperimetric inequalities for the mentioned notions of perimeter.
For the inifinite hexagonal grid the inequalities are tight.

\section{Prelimitaries}

A \emph{graph} is a pair $G=(V(G),E(G))$, where $V(G)$ is a (not necessarily finite) set, the vertex set, 
and $E(G)$ is a binary relation on $V(G)$, the edge relation.
Let $G$ be a graph and $W$ be a finite subset of $V(G)$.
We define the \emph{area} of $W$ as the cardinality $|W|$ of $W$, 
and introduce three notions for the \emph{perimeter} of $W$:
the number of neighbor vertices, 
where the set of neighbor vertices of $W$ is the set $\CN (W):=\{x\in V\setminus W\mid \{x,y\}\in E \land y\in W\}$,
the cardinality of the set of boundary vertices $\CB (W):=\{x\in W\mid \{x,y\}\in E \land y\in V\setminus W\}$ and 
the number of outgoing edges
$\CE (W):=\{\{x,y\}\in E\mid x\in W \land y\in V\setminus W\}$ (see Figure \ref{fig:perimetermeas} for an illustration).

The \emph{infinite hexagonal grid} is the grid graph formed by a tiling of regular hexagons,
in which three regular hexagons meet at each vertex.
We define the \emph{finite hexagonal grid of radius $r$}, denoted by $G_r$, 
inductively as a subgraph of the infinite hexagonal grid:
For $r=1$, the hexagonal grid~$G_1$, is a cycle of length $6$, depicted by a regular hexagon.
The graph $G_r$  consists all vertices and edges of $G_{r-1}$ and  
the layer of regular hexagons surrounding $G_{r-1}$ (see Figure~\ref{fig:finitegrid}).
We also denote the infinite hexagonal grid by $G_{\infty}$.

\begin{figure}[ht]
\centering
\includegraphics[width=0.22\textwidth]{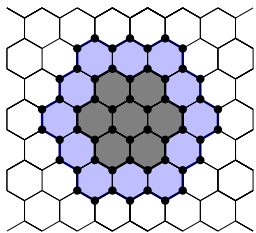}
\caption{$G_2$~and the layer~of~hexagons surrounding~it}
\label{fig:finitegrid}
\end{figure}

%


The following lemma can be shown by an easy induction.
\begin{lemma}\label{lem:NbOfVerticesOfFinHexGrid}
   Let $G_\infty$ be the infinite hexagonal grid and the subgraph $G_r$ be the finite hexagonal grid of radius $r$. 
  Then $|\CN(V(G_r))|=6r$,  $|\CE(V(G_r))|=6r$ and $|V(G_r)|=6r^2$.
\end{lemma}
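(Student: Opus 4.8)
The plan is to prove all three equalities simultaneously by induction on $r$, following the inductive definition of $G_r$ and carrying along enough information about the outer boundary to make the bookkeeping go through. Throughout I use that $G_\infty$ is $3$-regular: a vertex $v\in V(G_r)$ contributes exactly $3-\deg_{G_r}(v)$ edges to $\CE(V(G_r))$, so the outgoing edges are precisely those leaving the vertices whose degree \emph{inside} $G_r$ is less than three, and every such vertex lies on the outer boundary cycle of $G_r$. For the base case $r=1$, the graph $G_1$ is a $6$-cycle, so each of its six vertices has degree $2$ in $G_1$ and hence exactly one outgoing edge, giving $|\CE(V(G_1))|=6$; the six outgoing edges of a single hexagon reach six distinct vertices of $G_\infty$, so $|\CN(V(G_1))|=6$; and $|V(G_1)|=6$. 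All three agree with $6r$, $6r$, $6r^2$ at $r=1$.

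For the inductive step I assume the statement for $G_{r-1}$ and analyse the surrounding layer of hexagons. The key structural claim is: the $6(r-1)$ vertices of $\CN(V(G_{r-1}))$ are absorbed into $G_r$ and there become boundary vertices of degree $3$ (their single edge into $G_{r-1}$ now lies inside $G_r$, and their two remaining edges attach to freshly created vertices), while completing the new ring of hexagons introduces exactly $6r$ brand-new vertices, each of degree $2$ in $G_r$ and hence carrying exactly one outgoing edge. Granting this, the number of new vertices is $6(r-1)+6r=12r-6$, so $|V(G_r)|=6(r-1)^2+(12r-6)=6r^2$; and since the outgoing edges are exactly those leaving the $6r$ degree-$2$ vertices, $|\CE(V(G_r))|=6r$.

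To pass from $|\CE(V(G_r))|$ to $|\CN(V(G_r))|$ I must check that these $6r$ outgoing edges reach $6r$ \emph{distinct} vertices, i.e.\ that no vertex outside $G_r$ is adjacent to two vertices of $G_r$. This is where the convexity of the hexagonal patch enters: the $6r$ outward edges point into the region that will become the next ring, and by the honeycomb geometry distinct degree-$2$ boundary vertices send their outward edges to distinct vertices there; were some external $u$ adjacent to two vertices of $G_r$, the hexagons around $u$ would already be enclosed by $G_r$, forcing $u\in V(G_r)$, a contradiction. Hence $|\CN(V(G_r))|=|\CE(V(G_r))|=6r$, completing the induction.

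The main obstacle is precisely the structural claim in the second paragraph: making rigorous, for the honeycomb, which vertices the ring of $6(r-1)$ new hexagons creates versus shares with $\CN(V(G_{r-1}))$, and verifying the asserted degrees and the non-coincidence of the outward edges. I expect to discharge this either by fixing coordinates for $G_\infty$, or—more in the spirit of an ``easy induction''—by carrying as an additional inductive invariant an explicit description of the boundary cycle $C_r$ as a closed walk of length $12r-6$ containing exactly $6r$ degree-$2$ and $6(r-1)$ degree-$3$ vertices in a fixed periodic pattern; once this description is in place, the counts and the distinctness of the neighbours both follow by routine inspection of one layer.
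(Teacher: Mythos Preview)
Your proposal is correct and follows exactly the approach the paper indicates: the paper merely states that the lemma ``can be shown by an easy induction'' and gives no further details, so your inductive argument---with the layer-by-layer bookkeeping of absorbed neighbours and brand-new degree-$2$ boundary vertices---is precisely the intended proof, carried out in more detail than the paper itself provides. The structural claim you flag as the main obstacle is indeed the only thing to verify, and your suggested invariant (the boundary cycle of $G_r$ has $12r-6$ vertices in a periodic pattern of $6r$ degree-$2$ and $6(r-1)$ degree-$3$ vertices) is the clean way to do it.
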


\section{Infinite Hexagonal Grid}
Let $G_{\infty}$ be the infinite hexagonal grid.
\begin{theorem}\label{thm:infinitegrid}
 For any finite subset $W\subset V(G_{\infty})$ we have
	\begin{enumerate}
	   \item\label{lab:thm:infinitegrid1} $|\CN(W)|\geq c\cdot \sqrt{|W|}$ for $c=\sqrt{6}$, and $c$ is optimal.
		\item\label{lab:thm:infinitegrid2} $|\CE(W)|\geq c\cdot \sqrt{|W|}$ for $c=\sqrt{6}$, and $c$ is optimal.
		\item\label{lab:thm:infinitegrid3} $|\CB(W)|\geq c\cdot \sqrt{|W|+\frac{c^2}4}-\frac{c^2}2$ for $c=\sqrt{6}$, and $c$ is optimal.
	\end{enumerate}
\end{theorem}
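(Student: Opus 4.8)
The plan is to reduce all three inequalities to a single combinatorial isoperimetric statement and then exploit the planar structure of $G_\infty$. First I would record the reformulations obtained by squaring: since the relevant right-hand sides are nonnegative, part~\ref{lab:thm:infinitegrid1} is equivalent to $|\CN(W)|^2\ge 6|W|$, part~\ref{lab:thm:infinitegrid2} to $|\CE(W)|^2\ge 6|W|$, and part~\ref{lab:thm:infinitegrid3}, after adding $3$ and squaring, to $(|\CB(W)|+3)^2\ge 6|W|+9$, i.e. $|\CB(W)|\,(|\CB(W)|+6)\ge 6|W|$. The linear correction in the last one is genuinely needed: for a single edge ($|W|=2$) one has $|\CB(W)|=2<\sqrt{6|W|}$, so the naive bound $|\CB(W)|\ge\sqrt{6|W|}$ is false, while the corrected inequality $2\cdot 8\ge 12$ holds.

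Next I would set up the main tool via the degree-$3$ structure. Writing $e(W)$ for the number of edges of $G_\infty$ with both endpoints in $W$ and summing degrees over $W$ gives
\[
  |\CE(W)| = 3|W| - 2\,e(W),
\]
so part~\ref{lab:thm:infinitegrid2} amounts to an upper bound on the number of internal edges. To make the extremal shape visible I would pass to the planar dual: the dual of the infinite hexagonal grid is the triangular lattice, each vertex of $G_\infty$ corresponding to a unit triangle and each edge to a shared triangle side. Thus $W$ is encoded by the polyiamond $R(W)$, the union of the $|W|$ triangles dual to the vertices of $W$; its area is proportional to $|W|$ and its number of boundary edges is exactly $|\CE(W)|$. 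Under this dictionary the extremal sets $V(G_r)$ become regular hexagons of side $r$: indeed $R(V(G_r))$ consists of $6r^2$ triangles and has $6r$ boundary edges, consistent with Lemma~\ref{lem:NbOfVerticesOfFinHexGrid}.

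The heart of the argument is then the polyiamond isoperimetric inequality: a polyiamond with $n$ triangles and $b$ boundary edges satisfies $b^2\ge 6n$, with equality exactly for the hexagons $R(V(G_r))$. I would prove this by a compression/shifting argument along the three edge-directions of the triangular lattice — pushing the triangles together within each family of parallel lattice lines preserves $n$ and does not increase $b$, and the only configurations stable under all three compressions are the centered hexagons — after first reducing to a connected, hole-free $W$ (disjoint pieces only help, by superadditivity of $\sqrt{\cdot}\,$, and filling a hole lowers $|\CE(W)|$ while raising $|W|$, so the hole-free sets are the extremal ones). This yields part~\ref{lab:thm:infinitegrid2}, and optimality of $c=\sqrt6$ follows from $|\CE(V(G_r))|^2 = 36r^2 = 6\cdot 6r^2 = 6\,|V(G_r)|$.

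For the remaining parts I would read off $\CN(W)$ and $\CB(W)$ in the same dual picture: $\CN(W)$ is the set of triangles outside $R(W)$ sharing a side with $R(W)$, and $\CB(W)$ the set of triangles inside $R(W)$ sharing a side with the outside. Bounding these boundary-triangle counts against $b$, together with $b^2\ge 6n$, should give $|\CN(W)|^2\ge 6|W|$ and $|\CB(W)|\,(|\CB(W)|+6)\ge 6|W|$; the $+6|\CB(W)|$ correction reflects that one inner boundary triangle can carry up to two boundary edges, so $|\CB(W)|$ may be strictly smaller than $b$. Optimality in both cases again comes from $G_r$ via Lemma~\ref{lem:NbOfVerticesOfFinHexGrid}. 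I expect the main obstacle to be the sharp inequality $b^2\ge 6n$ itself — in particular verifying that three-directional compression is perimeter-nonincreasing and that centered hexagons are the unique stable shapes — and, secondarily, transferring the bound to $\CN$ and $\CB$, where a single outer triangle sharing several sides with $R(W)$ (shrinking $|\CN(W)|$) and an inner triangle carrying two boundary edges (shrinking $|\CB(W)|$) must be accounted for.
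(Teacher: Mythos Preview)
Your dualisation to polyiamonds is a genuinely different route from the paper and it does deliver part~\ref{lab:thm:infinitegrid2}: the identification of $|\CE(W)|$ with the polyiamond perimeter $b$ is exact, and the inequality $b^2\ge 6n$ (with equality on the hexagons $R(V(G_r))$) is correct. The paper instead works entirely inside the hexagonal grid: after a compression step that removes ``bad'' white rows, it lets $l_1,l_2,l_3$ be the numbers of occupied rows in the three directions, shows $|\CN(W)|\ge l_1+l_2+l_3$ by an outermost-neighbour count, bounds $|W|\le -\tfrac12(l_1^2+l_2^2+l_3^2)+(l_1l_2+l_1l_3+l_2l_3)$ by a parallelogram argument, and checks the quadratic inequality between these two expressions. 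So your compression is in the dual lattice and targets $b$, whereas the paper's compression is in the primal lattice and targets $|\CN(W)|$ directly.

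The gap in your plan is part~\ref{lab:thm:infinitegrid1}. You flag the right obstruction---an outer triangle can share two or three sides with $R(W)$---but you do not say how to overcome it, and $b^2\ge 6n$ alone is not enough: one can have $|\CN(W)|<b$ (e.g.\ take $W$ to be five of the six vertices of a hexagon; then $b=|\CE(W)|=7$ but $|\CN(W)|=6$), so ``bounding these boundary-triangle counts against $b$'' cannot simply mean $|\CN(W)|\ge b$. The paper's substitute is a direct combinatorial lemma: no vertex of $\CN(W)$ is an outermost neighbour for all three directions, hence the $2l_1+2l_2+2l_3$ outermost neighbours cover each element of $\CN(W)$ at most twice, giving $|\CN(W)|\ge l_1+l_2+l_3$. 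If you want to stay in the dual picture you would need an analogue of this lemma for outer triangles, not just the perimeter bound.

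For part~\ref{lab:thm:infinitegrid3} the paper does not argue in the dual at all; it sets $X:=W\setminus\CB(W)$, observes $\CN(X)\subseteq\CB(W)$, and applies part~\ref{lab:thm:infinitegrid1} to $X$ to get $|\CB(W)|\ge\sqrt6\sqrt{|W|-|\CB(W)|}$, which rearranges to your $|\CB(W)|(|\CB(W)|+6)\ge 6|W|$. This is cleaner than trying to relate $|\CB(W)|$ to $b$ directly, and it shows why your gap in part~\ref{lab:thm:infinitegrid1} propagates to part~\ref{lab:thm:infinitegrid3}. One small correction on optimality: the tight examples for part~\ref{lab:thm:infinitegrid3} are $W=V(G_r)\cup\CN(V(G_r))$, not $V(G_r)$ itself; for these, $\CB(W)=\CN(V(G_r))$ has $6r$ vertices and $|W|=6r^2+6r$, so $|\CB(W)|(|\CB(W)|+6)=6r(6r+6)=6|W|$.
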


We begin with the proof of Theorem~\ref{thm:infinitegrid}.\ref{lab:thm:infinitegrid1}, which consists of four parts.
First, we transform our set $W$ into a more suitable one having the same cardinality and at most as many neighbor vertices.
Then we establish a lower bound for the number of neighbor vertices, and after that, an upper bound for the number of vertices in $W\!$.
Finally, we combine these bounds to obtain the inequality in  Theorem~\ref{thm:infinitegrid}.\ref{lab:thm:infinitegrid1}.

\paragraph*{Part 1 \textnormal{(Transforming $W$)}.}
Let us fix three non-zero vectors $\vec{v}_1$, $\vec{v}_2$ and $\vec{v}_3$ 
in the tiling of regular hexagons,
such that for each edge of the tiling 
there is a vector that 
is parallel to this edge.
We call the vectors $\vec{v}_1$, $\vec{v}_2$ and $\vec{v}_3$ also \emph{direction vectors}, or short \emph{directions},
and talk of direction $1$, $2$ and $3$ instead of $\vec{v}_1$, $\vec{v}_2$ and $\vec{v}_3$.
If we remove all edges parallel to a direction vector we obtain  
a partition of the vertices of the grid by considering the connected components.
We call such a connected component a \emph{row}.
Thus, a direction vector partitions the infinite hexagonal grid into disjoint \emph{rows} of vertices 
(see Figure~\ref{fig:directions} for an illustration).
Further, it is easy to see that the intersection of two rows of different directions 
always consists of two adjacent vertices.
\begin{figure}[ht]
\centering
\begin{minipage}{0.8\textwidth}
\subfloat[Direction $1$ (blue)]{\includegraphics[totalheight=3.3cm]{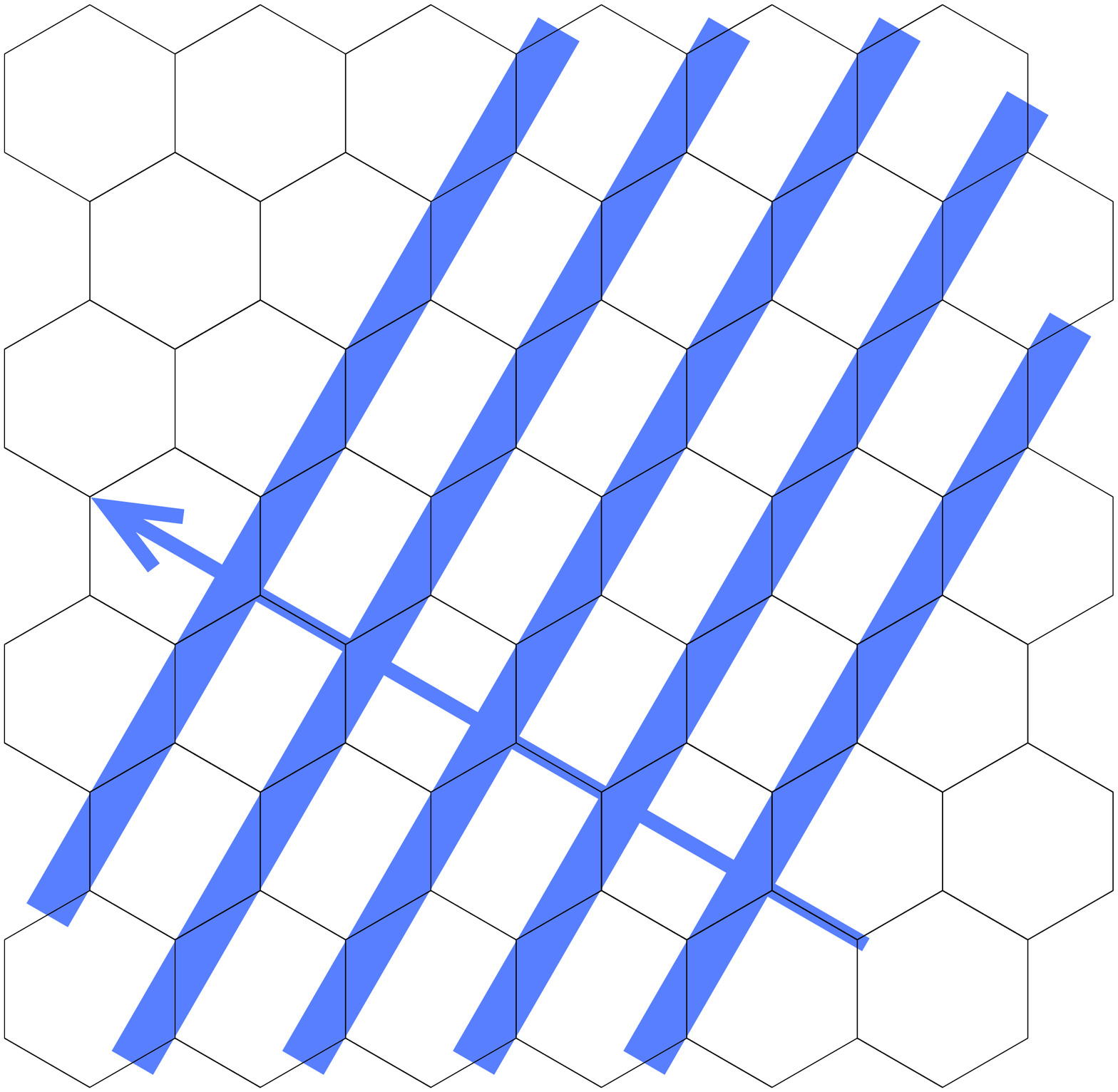}}\hfill
\subfloat[Direction $2$ (red)]{\includegraphics[totalheight=3.3cm]{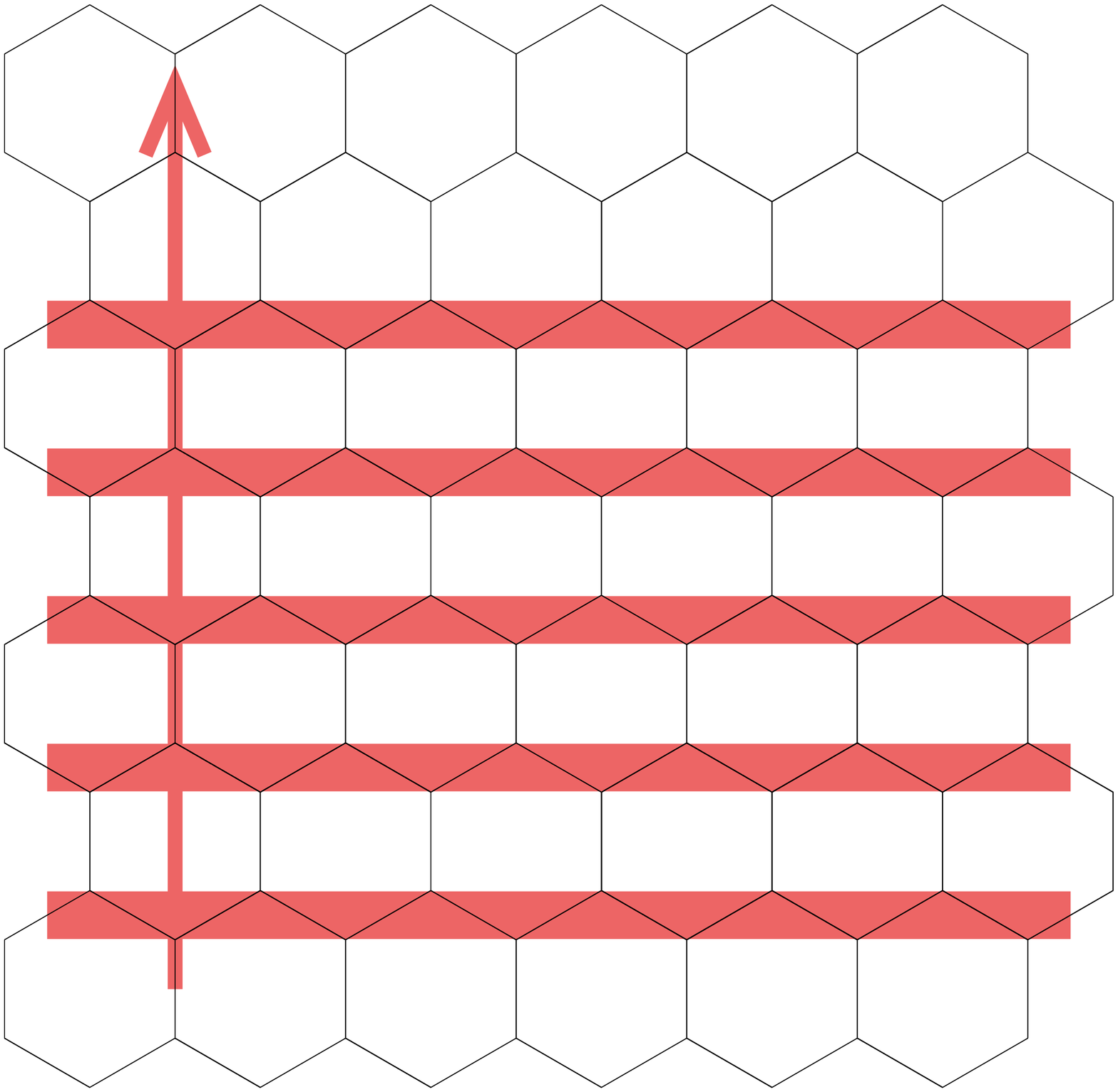}}\hfill
\subfloat[Direction $3$ (green)]{\includegraphics[totalheight=3.3cm]{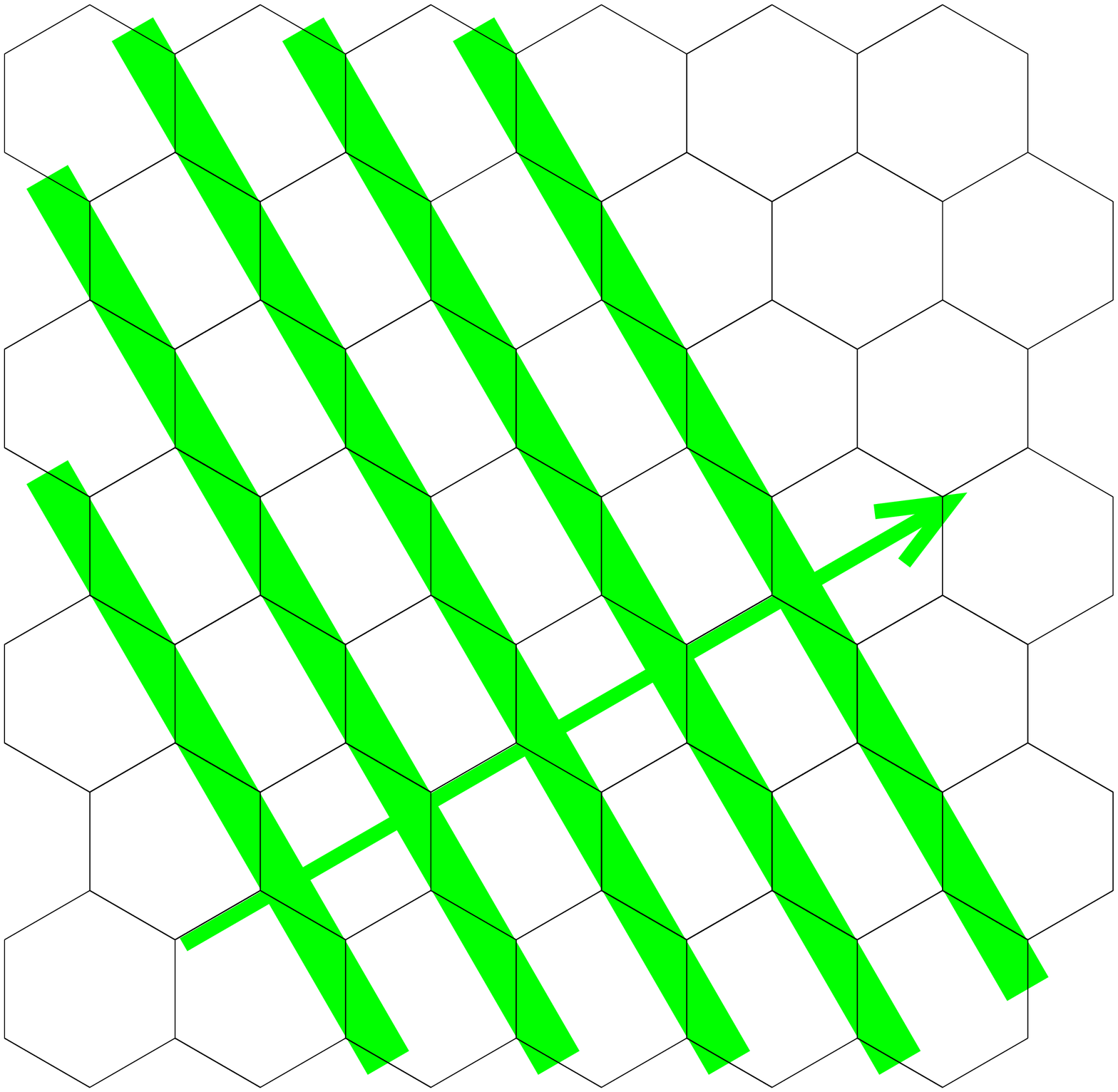}}
\end{minipage}
\caption{The $3$ different directions and their rows}
\label{fig:directions}
\end{figure}

Let $Z$ be a finite subset of $V(G_{\infty})$.
A vertex of $G_{\infty}$ is \emph{black with respect to $Z$} if it belongs to  $Z$, and \emph{white with respect to $Z$} if it does not belong to  $Z$.
Further, a row is \emph{white with respect to $Z$} if no vertex of it belongs to $Z\!$, and 
if at least one vertex of a row belongs to $Z$, then this row is \emph{gray with respect to $Z$}.
To simplify matters we will just talk of \emph{black} and \emph{white} vertices, and \emph{white} and \emph{gray} rows,
and make sure that it is clear from the context which set $Z$ we are refering to. 
Further, we call a row of direction $i$ \emph{bad} if it is white and separates two gray rows of direction $i$.


Let us consider a finite subset $W\subset V(G_{\infty})$.
With respect to $W\!$, we can assume that there exists no bad rows for any direction:
\begin{lemma}
For each set $W\!$, we can find a set $W'$
of same cardinality with less or equally many neighbor vertices having no bad rows 
for any direction. 
\end{lemma}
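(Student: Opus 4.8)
The plan is to eliminate bad rows one at a time by a local \emph{sliding} operation and to force the process to terminate by an extremal choice of $W'$. Fix a direction, say direction $1$, and order its rows as $\dots, R_{-1}, R_0, R_1, \dots$ along $\vec{v}_1$, so that the direction-$1$ edges (the ``rungs'') join consecutive rows. Suppose $R_k$ is bad, i.e.\ $R_k$ is white while some $R_a$ with $a<k$ and some $R_b$ with $b>k$ are gray. I would coordinatize each row as a bi-infinite path and fix, for each $i$, the position-preserving bijection between $R_{i+1}$ and $R_i$. The slide $S_k$ then recolours the grid by pushing the half $\{R_{k+1},R_{k+2},\dots\}$ down by one row: $W'$ agrees with $W$ on all rows $R_i$ with $i<k$, while a vertex $x\in R_i$ with $i\ge k$ is put in $W'$ precisely when the vertex of $R_{i+1}$ at the same position lies in $W$. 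Since $R_k$ was entirely white, this exactly closes the gap by one row, and since it is a bijection on black vertices we immediately get $|W'|=|W|$.

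The real content is the inequality $|\CN(W')|\le|\CN(W)|$. The only adjacencies that change lie in the seam between the untouched lower part (rows $R_i$, $i\le k-1$) and the shifted upper part (now occupying rows $R_i$, $i\ge k$). Because $\CN$ counts \emph{vertices}, not edges, I cannot split it additively into within-row and across-rung contributions; instead I would argue directly by charging, injectively mapping each white neighbour of $W'$ at the seam to a white neighbour of $W$, the surplus being paid for by the wasted white row $R_k$ that previously sat between two gray rows. The honeycomb's zigzag adjacency -- rungs occur at only every other position, alternating up and down, so that translation by a single row is not a grid automorphism -- makes this charging a genuine, if short, case analysis. This seam inequality is the step I expect to be the main obstacle.

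For termination I would \emph{not} treat each direction in isolation, since sliding in direction $1$ also displaces vertices relative to directions $2$ and $3$ and may change which rows are gray there. Instead I would choose, among all sets $W'$ with $|W'|=|W|$ and $|\CN(W')|\le|\CN(W)|$, one minimizing the potential $\Phi(W')=\sum_{i=1}^{3}\bigl(\max\{j : R_j \text{ gray in direction } i\}-\min\{j : R_j \text{ gray in direction } i\}\bigr)$, the total index-range of the gray rows over the three directions. If this minimizer still had a bad row $R_k$ in some direction, the slide $S_k$ would yield a set of the same cardinality with no more neighbours: closing the gap leaves the lowest gray index fixed and lowers the highest by one, so that direction's range drops by exactly one. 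Provided the two \emph{transverse} ranges do not increase, $\Phi$ strictly decreases, contradicting minimality; hence the minimizer has no bad row in any direction, which is the assertion of the lemma. Thus the work splits into the seam inequality above and the verification that $S_k$ does not enlarge the ranges in the other two directions, these two being the crux of the argument.
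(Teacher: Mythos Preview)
Your plan has a genuine gap at the point you yourself flag as ``the main obstacle'': the single-row slide $S_k$ does \emph{not} only disturb adjacencies at the seam. In brick-wall coordinates (rows $R_i=\{(x,i):x\in\Z\}$, a rung between $(x,i)$ and $(x,i+1)$ present iff $x+i$ is even), your position-preserving bijection sends $(x,i+1)\mapsto(x,i)$. But then a rung inside the upper half, say between $(x,i+1)$ and $(x,i+2)$ (present iff $x+i$ is odd), is carried to the pair $(x,i),(x,i+1)$, where a rung is present iff $x+i$ is even: every internal rung flips. Concretely, if the upper part of $W$ contains the two rung-adjacent vertices $(0,2),(0,3)$, then after sliding down by one row they land on $(0,1),(0,2)$, which are \emph{not} adjacent; the slid pair acquires two new vertical neighbours $(0,0)$ and $(0,3)$ that had no analogue before. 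So $|\CN|$ can genuinely increase inside the upper half, far from the seam, and no seam-local charging can repair this. Your subsequent extremal argument via $\Phi$ then collapses, since it presupposes $|\CN(W')|\le|\CN(W)|$ for the slid set.

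The paper avoids this by never using a non-automorphic shift. Its move translates $W_2$ by \emph{two} steps along a row of some transverse direction $j\neq i$; this is a genuine lattice translation of $G_\infty$, so $|\CN(W_2)|$ is preserved exactly, and since two steps along a direction-$j$ row cross exactly one direction-$i$ rung, the move advances $W_2$ by exactly one direction-$i$ row toward the gap. The inequality $|\CN(W')|\le|\CN(W)|$ then follows immediately from the observation that a full white row between $W_1$ and $W_2$ forces $\CN(W_1)\cap\CN(W_2)=\emptyset$: no vertex can neighbour both sides across an empty row. Termination is handled, much as you intended with $\Phi$, by a shrinking bounding parallelogram in directions~$1$ and~$2$. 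If you replace your $S_k$ by this two-step transverse translation, your potential-function framework would in fact go through (the translation fixes each direction-$j$ row and merely shifts the direction-$k$ rows, so neither transverse range grows); as written, however, the slide is the wrong map.
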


\begin{figure}[hbt]
\centering
\begin{minipage}{1\textwidth}
\subfloat[A white row of direction~$1$ partitioning $W$ into $W_1$ and~$W_2$]
{\label{fig:whiterowsa}\includegraphics[totalheight=4cm]{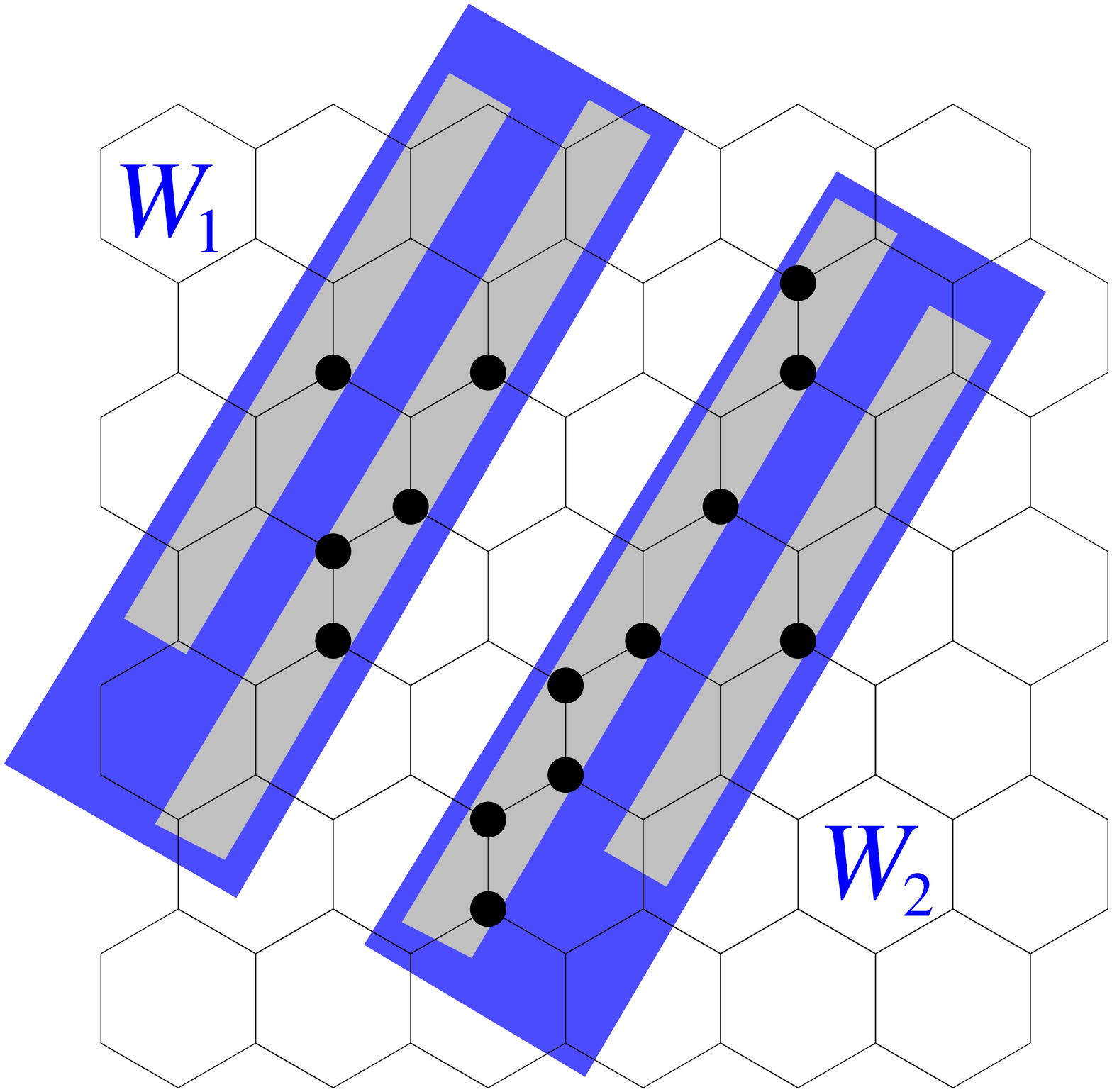}}\hfill
\subfloat[Neighbor vertices of $W_1$ and $W_2$]
{\label{fig:whiterowsb}\includegraphics[totalheight=4cm]{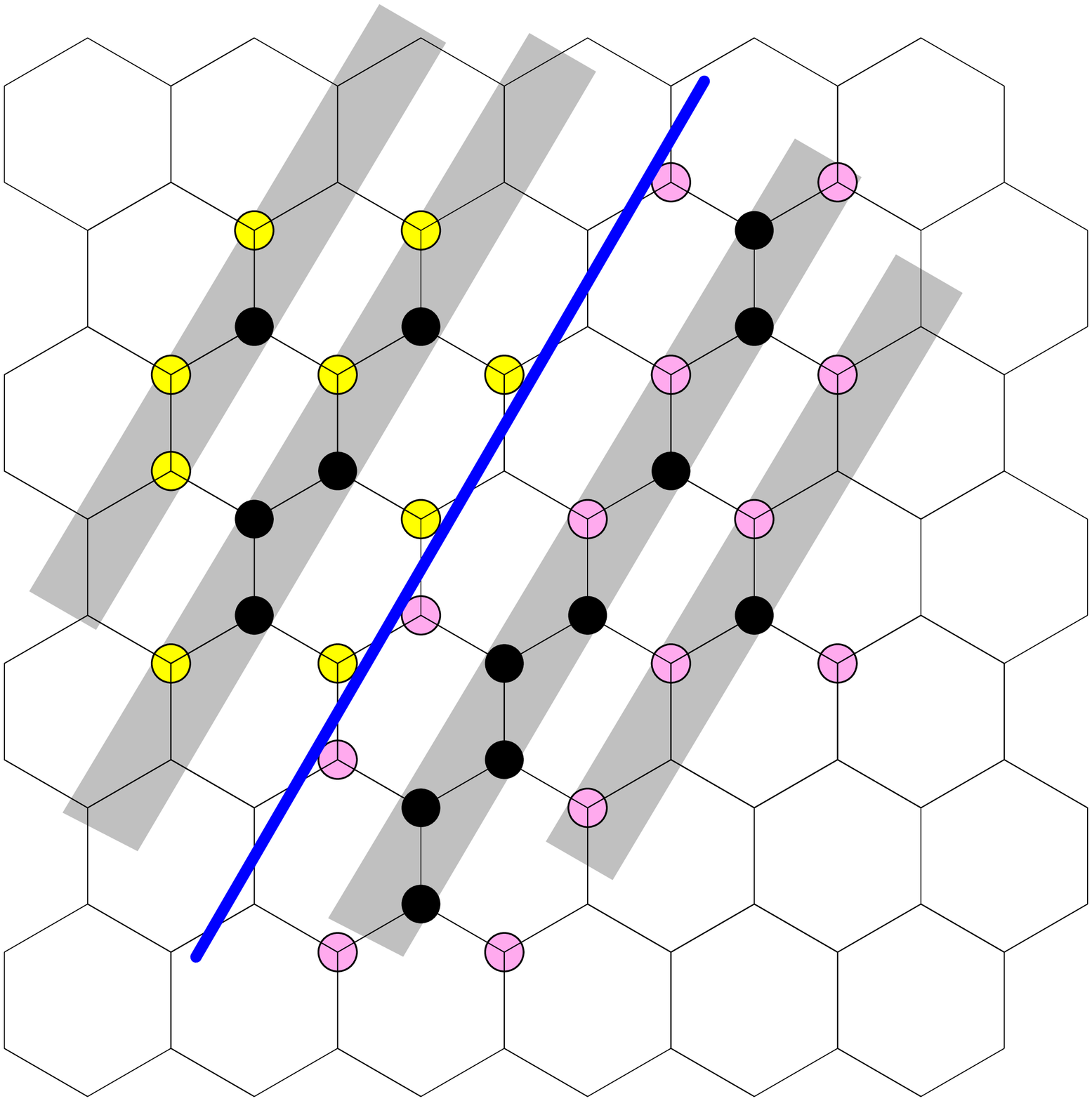}}\hfill
\subfloat[Movement along rows of direction $2$ to eliminate white row of direction 1]
{\label{fig:whiterowsc}\includegraphics[totalheight=4cm]{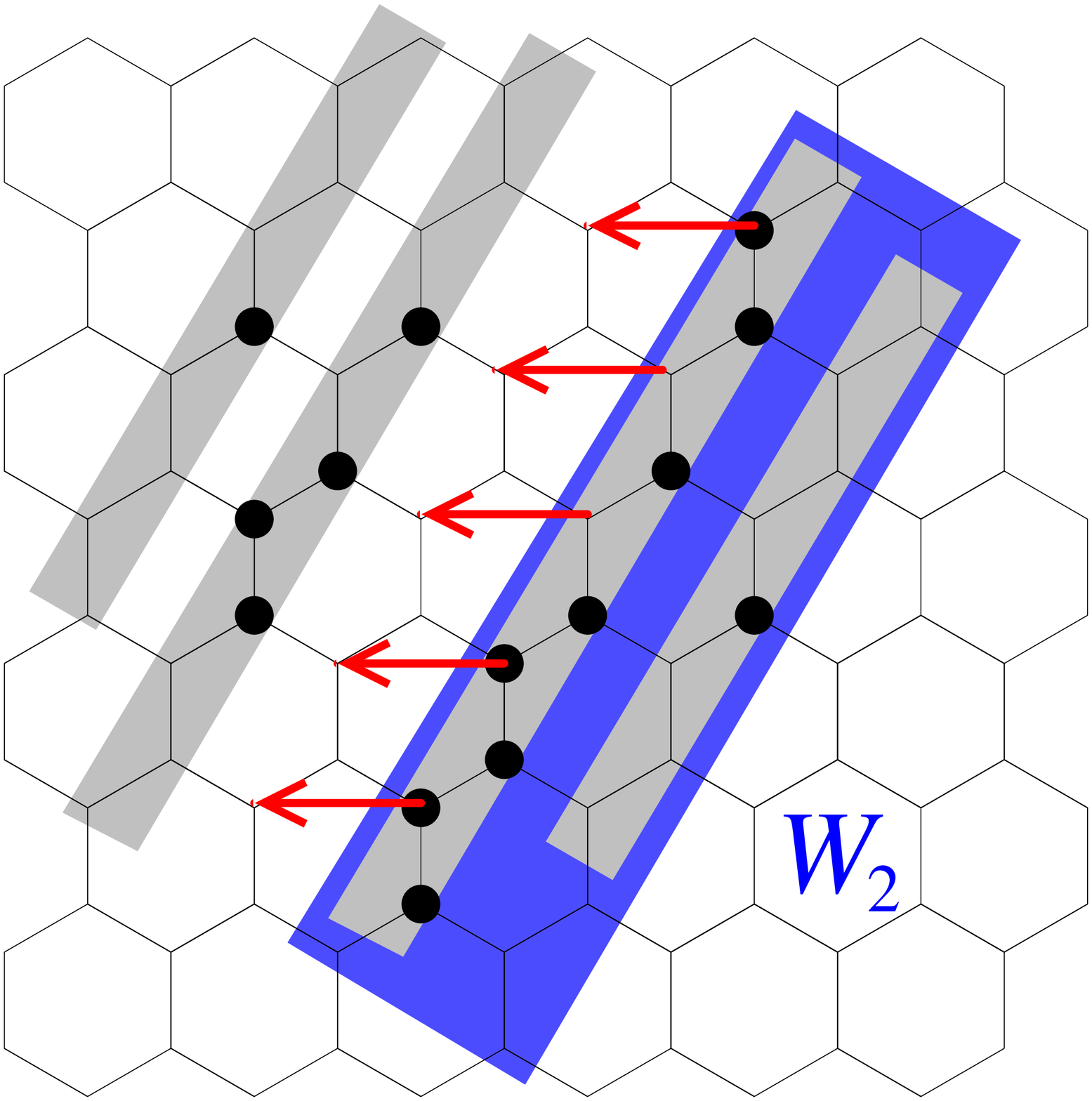}}
\end{minipage}
\caption{Eliminating white rows}
\label{fig:whiterows}
\end{figure}

\begin{proof}
Let us consider the set $W\!$.
Assume in direction $i\in\{1,2,3\}$ there is a bad row $R_i$. 
Then this white row partitions the set $W$ into two sets $W_1$ and $W_2$ (see Figure~\ref{fig:whiterowsa}).
Due to the structure of the grid the sets $\CN(W_1)$ and $\CN(W_2)$ of neighbor vertices are disjoint sets of vertices,
which can easily be perceived by Figure~\ref{fig:whiterowsb}.
Therefore, it is possible to shift the vertices of $W_2$ 
closer to the vertices of $W_1$
without increasing the number of neighbor vertices.
We do this by fixing some $j\in\{1,2,3\}\setminus\{i\}$ and moving all black vertices $v\in W_2$ along their row of direction $j$ 
two vertices closer to $R_i$
(see Figure~\ref{fig:whiterowsc}).
By \emph{moving a vertex} we mean replacing it
by its corresponding shifted vertex. 
Notice that this way, we do not produce any bad rows for direction~$j$.
Thus, it is possible to eliminate bad rows of direction $i$ without creating new bad rows in direction $j$.
In short, we also say we \emph{eliminate} a bad row of direction $i$ (by \emph{moving} $W_2$) \emph{agreeable} to direction $j$.

%


Let $Z$ be an arbitrary non-empty finite subset of vertices of $G_\infty$.
Let $R^i_1$ and $R^i_2$ be the two outermost white rows of direction $i$ that have a black vertex in its neighborhood.
We define the \emph{parallelogram} of a subset $Z$ 
to be the only connected component of $G_\infty\setminus {(R^1_1\cup R^1_2\cup R^2_1\cup R^2_2)}$ that is finite,
and denote it by $P_Z$
(see Figure~\ref{fig:parallelwhite_a}).


Now, we can inductively construct the set $W'$ from $W\!$.
First we eliminate all bad rows (one after another) of 
direction $1$ agreeable to direction $2$, and after that,
all bad rows of direction $2$ agreeable to direction $1$.
We obtain a set $W^*\!$, for which, in case there was at least one bad row of direction~$1$ or~$2$, the 
parallelogram $P_{W^*}$ is a proper subset of the parallelogram of $W\!$.
Now there might be bad rows regarding direction~$3$. 
We can eliminate each such bad row $R_3$ by moving the vertices of a smallest component of $P_{W^*}\setminus R_3$
agreeable to the direction of the rows that contain a longest side of the parallelogram (see Figure~\ref{fig:parallelwhite_b} for an illustration).
As a consequence the parallelogramm of the set we obtain is a subset of $P_{W^*}$.

%
%

\begin{figure}[ht]
\centering
\begin{minipage}{0.8\textwidth}
\subfloat[Parallelogram of a subset $Z$]
{\label{fig:parallelwhite_a}\includegraphics[totalheight=3.4cm]{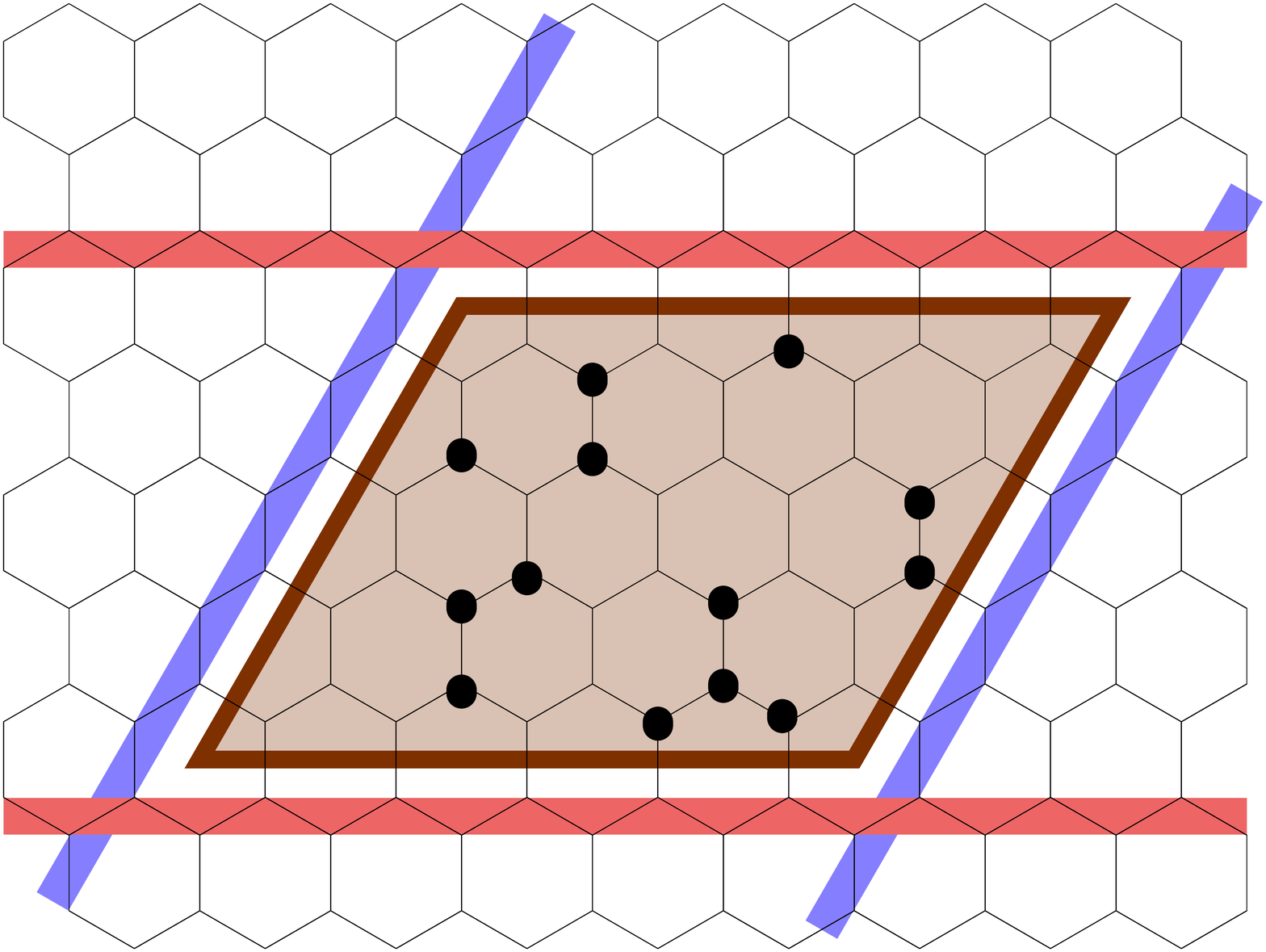}}\hfill
\subfloat[Eliminating a bad row of direction $3$ by moving vertices of component $B_2$ agreeable to direction $2$]
{\label{fig:parallelwhite_b}\includegraphics[totalheight=3.4cm]{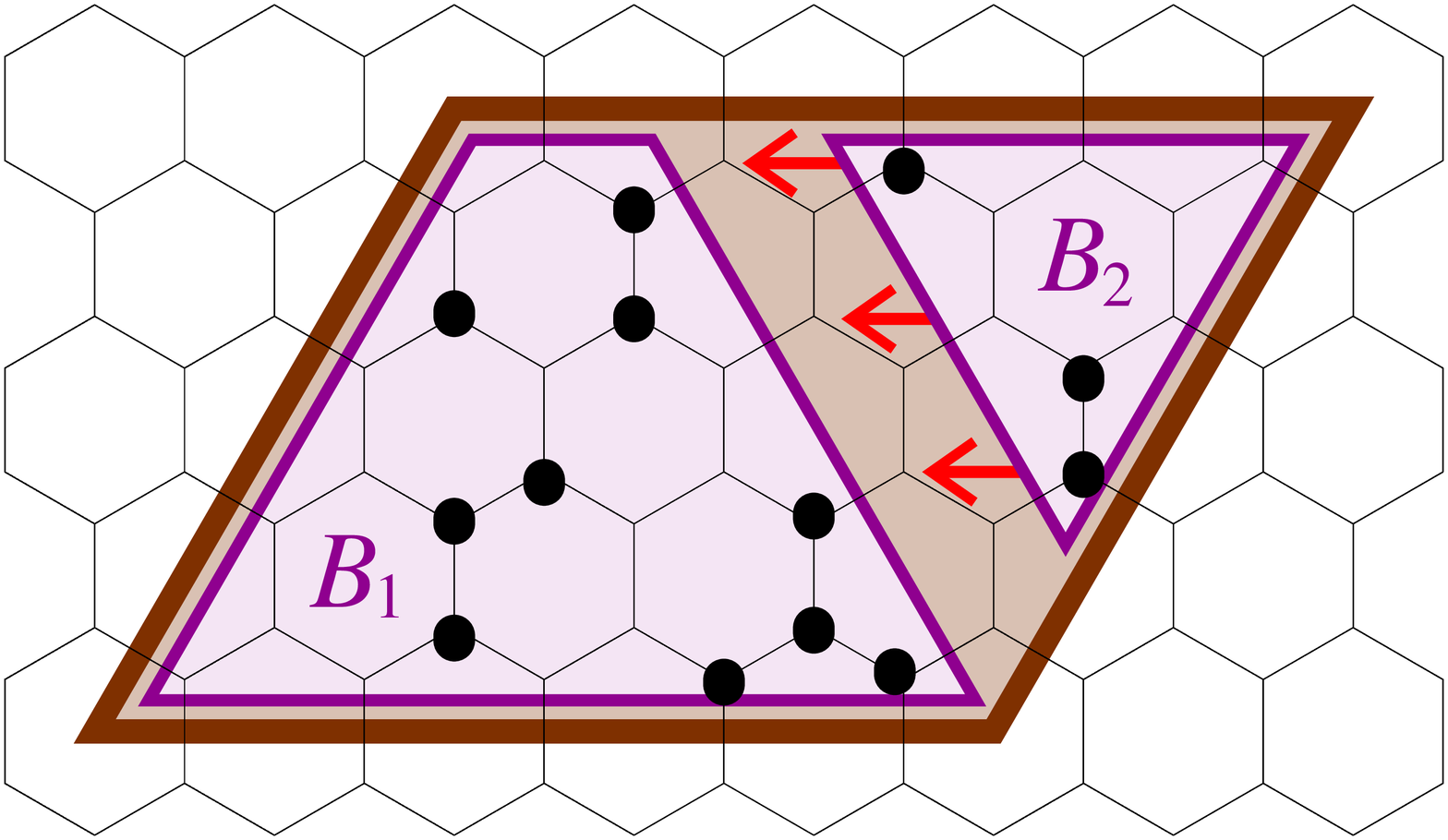}}
\end{minipage}
\caption{Eliminating bad rows}
\label{fig:parallelwhite}
\end{figure}

We can repeat this procedure until we get a set $W'$ with no bad rows of direction $1$ or $2$.
It is easy to see that $W'$ has the same cardinality
and at most as many neighbor vertices as $W\!$, and 
as we removed all bad rows of direction $3$ in the last step, the set $W'$ has no bad rows.
\end{proof}

%

In the following we assume that $W$ is such that there are no bad rows
for all directions. 

\paragraph{Part 2 \textnormal{(Lower Bound for $|\CN(W)|$)}.}
Let $l_i$ be the number of gray rows in direction $i$. 
Without loss of generality, let $l_3\geq\max\{l_1,l_2\}$.
For any direction there are two outermost neighbors of $W$ within each gray row. Thus, 
direction $i$ identifies $2 l_i$ outermost neighbors.
In Figure~\ref{fig:outermostneighbors_a} all outermost neighbors regarding direction $2$ are shown.
Of course, for each direction the outermost neighbors of the gray rows are neighbor vertices, and therefore, belong to $\CN(W)$.
Hence, for each direction the number of outermost neighbors is a lower bound on $|\CN(W)|$.
In order to increase this lower bound, we consider all three directions.
If we take a look at the outermost neighbors for direction~$i$, we observe that some
of these neighbors might also occur as outermost neighbors for other directions 
(see Figure~\ref{fig:outermostneighbors_b}).

\begin{figure}[ht]
\centering
\begin{minipage}{0.65\textwidth}
\subfloat[Outermost neighbors for direction $2$]
{\label{fig:outermostneighbors_a}\resizebox{!}{3.8cm}{\input{fig6neighborschrift.tex}}}\hfill
\subfloat[Outermost neighbors for direction~$2$ and rows of direction~$1$; 
the encircled neighbors are also outermost neighbors for direction~$1$]
{\label{fig:outermostneighbors_b}\includegraphics[totalheight=3.8cm]{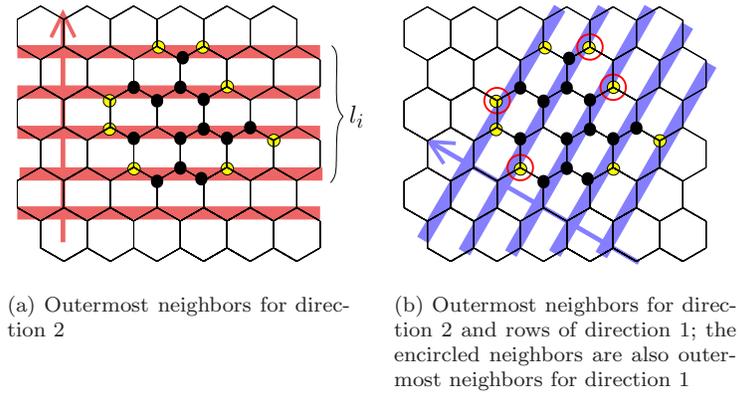}}
\end{minipage}
\caption{Outermost neighbors}
\label{fig:outermostneighbors}
\end{figure}

\begin{lemma}\label{lem:neighbortwice}
Every neighbor is an outermost neighbor for at most two directions.
\end{lemma}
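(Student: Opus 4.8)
The plan is to fix a single neighbor vertex $v\in\CN(W)$ and argue entirely locally, using that $v$ has exactly three incident edges, one parallel to each direction $\vec v_1,\vec v_2,\vec v_3$. For $t\in\{1,2,3\}$ write $n_t$ for the endpoint of the edge at $v$ parallel to $\vec v_t$, so that $n_1,n_2,n_3$ are the three grid-neighbors of $v$. The structural fact I would record first is this: the row of direction $i$ through $v$ is obtained by deleting the edges parallel to $\vec v_i$, so inside this row the two neighbors of $v$ along the path are exactly $n_j$ and $n_k$, where $\{j,k\}=\{1,2,3\}\setminus\{i\}$; the deleted edge to $n_i$ places $n_i$ in a different row of direction $i$.

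The heart of the argument — and the step I expect to require the most care — is a local characterization of ``outermost neighbor''. I would show that if $v$ is an outermost neighbor for direction $i$, then exactly one of $n_j,n_k$ is black. The two outermost neighbors of a gray row are the two white vertices that immediately flank the black vertices along that (zigzag) path, so an outermost neighbor has all black vertices of its row strictly on one side of it. For the ``at most one'' half: if both row-neighbors $n_j$ and $n_k$ of $v$ were black, then $v$ would have a black vertex on each side along the row and so could not be a flanking vertex. For the ``at least one'' half: being a flanking white vertex forces the immediate neighbor on the inner (black) side to be black, and that neighbor is one of $n_j,n_k$. Combining the two halves gives exactly one black among $\{n_j,n_k\}$. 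The delicate points here are to derive this cleanly from the definition of the flanking vertices and to check that the degenerate configurations (a row containing a single black vertex, or $v$ being adjacent to a black vertex only through its deleted direction-$i$ edge) produce no exceptions; in the last case $v$ is simply not a flanking vertex of its row, hence not outermost for $i$.

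Once this necessary condition is established, the conclusion follows from a parity count, which is the clean part. Suppose toward a contradiction that $v$ is an outermost neighbor for all three directions. Applying the characterization for $i=1,2,3$ gives that exactly one of $\{n_2,n_3\}$, exactly one of $\{n_1,n_3\}$, and exactly one of $\{n_1,n_2\}$ is black. Letting $x_t\in\{0,1\}$ indicate whether $n_t$ is black, these read $x_2+x_3=x_1+x_3=x_1+x_2=1$, and summing yields $2(x_1+x_2+x_3)=3$, which is impossible over the integers. Hence $v$ cannot be outermost for all three directions, i.e.\ every neighbor is an outermost neighbor for at most two directions. Note that this argument is fully symmetric in the three directions, so no case analysis on which neighbors are black is needed.
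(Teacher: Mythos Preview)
Your proof is correct and follows essentially the same approach as the paper: both isolate the local characterization that if $v$ is an outermost neighbor for direction $i$, then exactly one of its two row-neighbors (the endpoints of the edges not parallel to $\vec v_i$) is black, and then derive a contradiction from assuming this holds for all three directions simultaneously. The only difference is cosmetic---the paper finishes by a short WLOG case chase (fix which neighbor is black for direction~$1$, propagate through direction~$2$, contradict at direction~$3$), whereas your symmetric parity count $2(x_1+x_2+x_3)=3$ reaches the same contradiction in one line.
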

\begin{proof}
Assume vertex $v$ is outermost neighbor for three directions.
Let $v_{1}$,  $v_{2}$ and $v_{3}$ be the neighbors of $v$,
such that regarding direction $1$ the vertices $v_2$ and $v_3$,
for direction $2$ the vertices $v_1$ and $v_3$
and for direction $3$ the vertices $v_2$ and $v_3$ occur in the same row as $v$ 
(see Figure~\ref{fig:neighborthreea}).
As $v$ is an outermost neighbor regarding direction $1$, either $v_2$ is a black vertex and $v_3$ is white or 
the other way round.
As $G_{\infty}$ is symmetric, we can assume $v_2$ is a black and $v_3$ is a white vertex 
(see Figure~\ref{fig:neighborthreeb}).
Consequently, $v_1$ has to be a black vertex, because  $v_3$ is white and $v$ is an outermost neighbor 
for direction $2$ (see Figure~\ref{fig:neighborthreec}).
But then $v$ is no outermost neighbor for 
direction $3$ (see Figure~\ref{fig:neighborthreed}), a contradiction.
\end{proof}

\begin{figure}[ht]
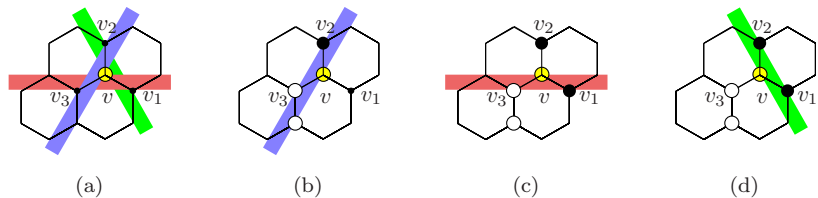

\centering
\begin{minipage}{0.75\textwidth}
\subfloat[]{\label{fig:neighborthreea}\resizebox{!}{2.2cm}{\input{fig7neighbthree1.tex}}}\hfill
\subfloat[]{\label{fig:neighborthreeb}\resizebox{!}{2.2cm}{\input{fig7neighbthree2.tex}}}\hfill
\subfloat[]{\label{fig:neighborthreec}\resizebox{!}{2.2cm}{\input{fig7neighbthree3.tex}}}\hfill
\subfloat[]{\label{fig:neighborthreed}\resizebox{!}{2.2cm}{\input{fig7neighbthree4.tex}}}
\end{minipage}
\caption{Counting directions for which vertex $v$ is outermost neighbor}
\label{fig:neighborthree}
\end{figure}

Applying Lemma~\ref{lem:neighbortwice}, we obtain the following lower bound for the number 
of vertices in $\CN(W)$:  
\begin{equation}\label{equ:neighborvertices}
  |\CN(W)|\geq\frac12\cdot(2l_1+2l_2+2l_3)=l_1+l_2+l_3.
\end{equation}

\paragraph{Part 3 \textnormal{(Upper Bound for $|W|$)}.}
Now, we want to determine an upper bound for the number of vertices in $W$ in terms of $l_1$, $l_2$ and $l_3$.
For example $2l_1l_2$ is a possible upper bound on the number of vertices in $W$ (see Figure~\ref{fig:parallelogramma}).
It is the number of vertices within a parallelogram with $l_1$ rows in direction 1 and $l_2$ rows in direction 2.
(Remember that there are no white rows in between gray rows for direction $1$ and $2$).
But this bound does not consider $l_3$. Knowing the number of gray rows regarding the third direction, 
we can exclude some third direction rows of the parallelogram (see Figure~\ref{fig:parallelogrammb}).
Since the outermost rows contain the least vertices, we exclude them for an upper bound,
and as 
the number of rows of direction 3 in the parallelogram are $l_1+l_2$, 
we exclude $l_1+l_2-l_3$ outermost rows.

\begin{figure}[ht]
\centering
\begin{minipage}{0.75\textwidth}
\subfloat[]{\label{fig:parallelogramma}\includegraphics[totalheight=3.3cm]{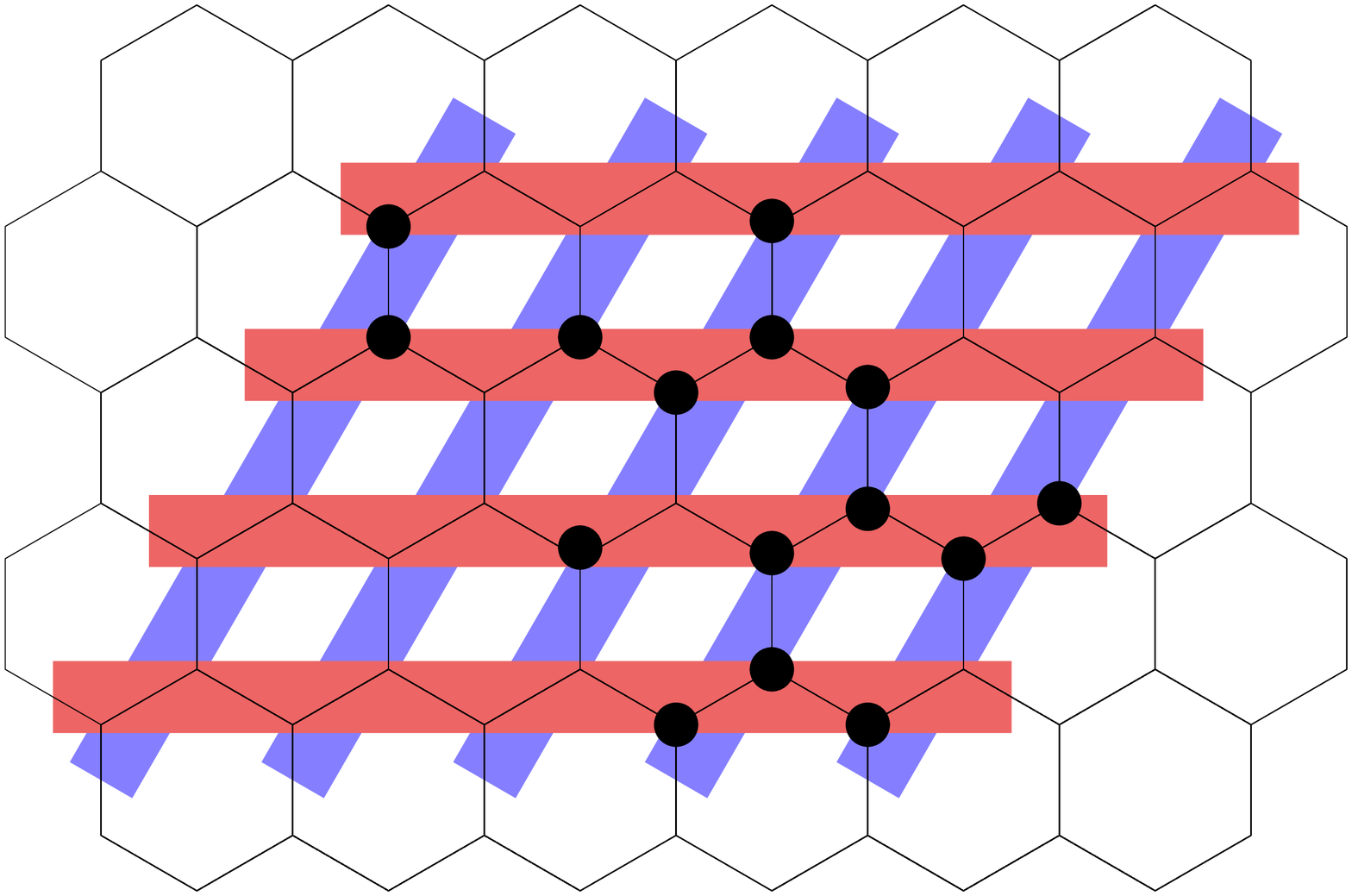}}\hfill
\subfloat[]{\label{fig:parallelogrammb}\includegraphics[totalheight=3.3cm]{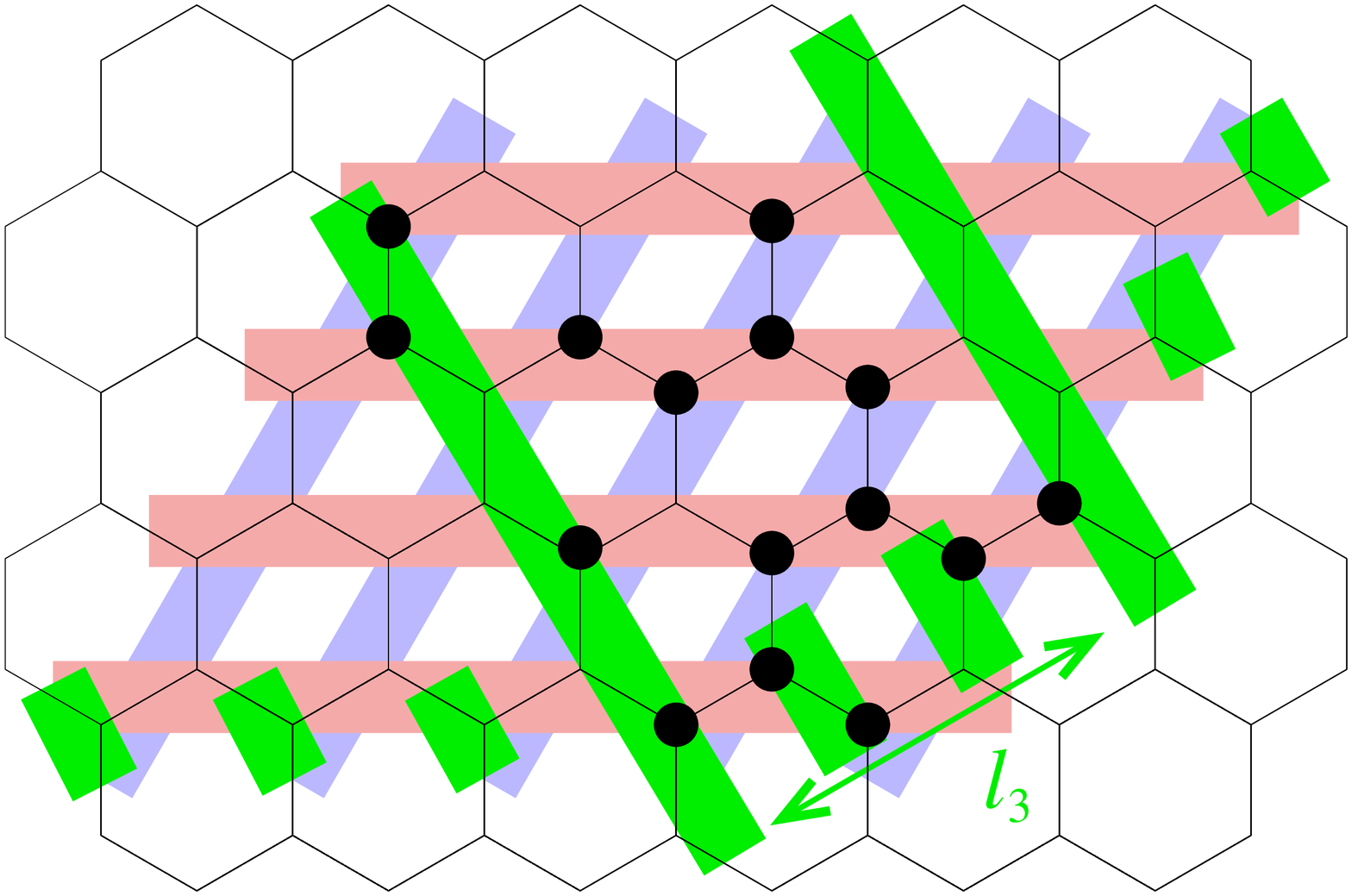}}
\end{minipage}
\caption{Upper bound on the number of vertices in $W$ using a parallelogram}
\label{fig:parallelogramm}
\end{figure}
\begin{figure}[ht]
\centering
\includegraphics[totalheight=3.3cm]{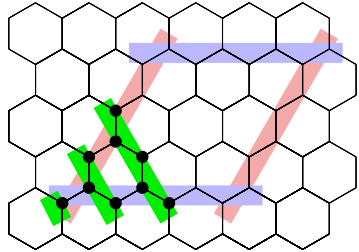}
\caption{Counting vertices inside parallelogram}
\label{fig:countingvertices}
\end{figure}

We know that $l_1+l_2-l_3\leq \min\{l_1,l_2\}$, because $l_3\geq\max\{l_1,l_2\}$. 
Thus,  
if we exclude $k$ rows from one side, then  
$k\leq \min\{l_1,l_2\}$ and we remove $\sum_{i=1}^{k}(2i-1)=k^2$ vertices (see Figure~\ref{fig:countingvertices}).
Now we have to distinguish between two cases:
If  $l_1+l_2-l_3$  is even, we can exclude $\frac{1}{2}(l_1+l_2-l_3)$ rows from each side, and therefore, 
we can remove $2(\frac{1}{2}(l_1+l_2-l_3))^2$ vertices from the parallelogram.
If $l_1+l_2-l_3$  is odd, we achieve the best result by excluding $\frac{1}{2}(l_1+l_2-l_3-1)$ rows from one side and 
$\frac{1}{2}(l_1+l_2-l_3+1)$ rows from the other side. 
Hence, we can bound the number of vertices that we can remove by
\begin{align*}
 &\left(\frac{1}{2}(l_1+l_2-l_3-1)\right)^2+\left(\frac{1}{2}(l_1+l_2-l_3+1)\right)^2\\
=\quad&\ \frac{1}{2}(l_1+l_2-l_3)^2+\frac12.
\end{align*}

Therefore we can remove at least $\lceil\frac12\left(l_1+l_2-l_3\right)^2\rceil$ vertices from the parallelogram,
and can give a better upper bound on the number of vertices in $W$:
\begin{align}
\nonumber    |W|\quad\leq\quad &   2l_1l_2 - \frac12(l_1+l_2-l_3)^2\\
\label{equ:cardinality}   = \quad& - \frac12 (l_1^2+ l_2^2 + l_3^2) + (l_1l_2 + l_1l_3 +l_2l_3).
%
\end{align}
\paragraph{Part 4 \textnormal{(Proof)}.}
Finally, we can proof Theorem~\ref{thm:infinitegrid}.\ref{lab:thm:infinitegrid1}.
\begin{proof}[Proof of Theorem~\ref{thm:infinitegrid}.\ref{lab:thm:infinitegrid1}]
We use the lower bound on $|\CN(W)|$ and the upper bound on $|W|$ to show that the inequality of  
Theorem~\ref{thm:infinitegrid}.\ref{lab:thm:infinitegrid1} holds:
\begin{align*}
	|\CN(W)|^2\quad\geq\quad&  (l_1+l_2+l_3)^2\\
	                           \geq\quad&  6\cdot \big(-\frac12 (l_1^2+ l_2^2 + l_3^2) + (l_1l_2 + l_1l_3 +l_2l_3)\big)\\
	                           \geq\quad&  6 \cdot |W|.
\end{align*}
Here, the first and third inequality are shown in (\ref{equ:neighborvertices}) and (\ref{equ:cardinality}), respectively.
The second inequality can be verified by an easy calculation.
In order to show that the inequality is tight, we give an example.
For an arbitrary $r\in \N$, let us consider the subset $W:=V(G_r)$ of vertices of the infinite grid $G_{\infty}$.
Then the tightness of the inequality follows directly from Lemma~\ref{lem:NbOfVerticesOfFinHexGrid}. 
\end{proof}

The proof of  Theorem~\ref{thm:infinitegrid}.\ref{lab:thm:infinitegrid2} 
works analogous to  the one of Theorem~\ref{thm:infinitegrid}.\ref{lab:thm:infinitegrid1}: 
\begin{proof}[Proof of Theorem~\ref{thm:infinitegrid}.\ref{lab:thm:infinitegrid2}]
If we use outermost outgoing egdes instead of  outermost neighbor vertices, 
we can derive  the same lower bound for the number of outgoing edges as for the number of neighbor vertices.
Then, the rest of the proof of the inequality in Theorem~\ref{thm:infinitegrid}.\ref{lab:thm:infinitegrid2} 
works as in Theorem~\ref{thm:infinitegrid}.\ref{lab:thm:infinitegrid1}.
Since $|\CN(V(G_r))|=|\CE(V(G_r))|$ (as shown in Lemma~\ref{lem:NbOfVerticesOfFinHexGrid}),
the set $W:=V(G_r)$ is also an example for the inequality of    Theorem~\ref{thm:infinitegrid}.\ref{lab:thm:infinitegrid2} 
being tight.
%
\end{proof}

Finally, we proof Theorem~\ref{thm:infinitegrid}.\ref{lab:thm:infinitegrid3} using 
Theorem~\ref{thm:infinitegrid}.\ref{lab:thm:infinitegrid1}.

\begin{proof}[Proof of Theorem~\ref{thm:infinitegrid}.\ref{lab:thm:infinitegrid3}]
Let us consider a finite subset $W\subset G_\infty$.
Let $X$ be the set $W\setminus\CB(W)$.
Then, the neighbor vertices $\CN(X)$ are a subset of $\CB(W)$.
Therefore, we have $|\CN(X)|\leq|\CB(W)|$.
Using this inequality and Theorem~\ref{thm:infinitegrid}.\ref{lab:thm:infinitegrid1} for $c=\sqrt{6}$ we obtain 
\begin{equation}\label{equ:boundary}
 |\CB(W)|\geq|\CN(X)|\geq c\cdot \sqrt{|X|}= c\cdot\sqrt{|W|-|\CB(W)|}.
\end{equation}
An easy calculation shows the inequality in Theorem~\ref{thm:infinitegrid}.\ref{lab:thm:infinitegrid3}:
\begin{align*}
        &&    |\CB(W)|\ \quad\geq\quad& c\cdot\sqrt{|W|-|\CB(W)|}\quad\quad\quad\quad\quad\quad\quad\quad\\
\iff\quad&&    |\CB(W)|^2\quad\geq\quad& c^2\cdot(|W|-|\CB(W)|)\\
\iff\quad&&    (|\CB(W)|+\frac{c^2}2)^2\quad\geq\quad& c^2|W|+\frac{c^4}4\\
\iff\quad&&    |\CB(W)|+\frac{c^2}2\quad\geq\quad& c\cdot \sqrt{|W|+\frac{c^2}4}.
\end{align*}
Again, we give an example to show that the inequality is tight:
For an arbitrary $r\in \N$, let us consider the subset $W:=V(G_r)\cup \CN(V(G_r))$ of vertices of the infinite grid $G_{\infty}$.
An illustration of this subset can be found in Figure~\ref{fig:exampleboundary}.
Since $|\CN(X)|=|\CB(W)|$ in this example, 
and the inequality of Theorem~\ref{thm:infinitegrid}.\ref{lab:thm:infinitegrid1} is tight for the subset $X=V(G_r)$,
the inequality in~(\ref{equ:boundary}) is tight, 
and therefore also the inequality in Theorem~\ref{thm:infinitegrid}.\ref{lab:thm:infinitegrid3}.

\end{proof}
\begin{figure}[h]
\centering
\includegraphics[width=0.2\textwidth]{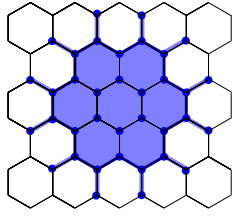}
\caption{The set $V(G_2)\cup \CN(V(G_2))$}
\label{fig:exampleboundary}
\end{figure}

\section{Finite Hexagonal Grid}


For finite hexagonal grids, we obtain the following results:

\begin{theorem}\label{thm:finite}
	For all $r\geq 0$ and all finite subsets $W\subseteq V(G_r)$ with $|W|\leq \frac{1}2 |V(G_r)|$ we have
	\begin{enumerate}
		\item\label{thm:finite1} $|\CN(W)|\geq c\cdot \sqrt{|W|}$ for $c=0.6053$. 
		\item\label{thm:finite2} $|\CE(W)|\geq c\cdot \sqrt{|W|}$ for $c=\sqrt{6}-\sqrt{3}\approx 0.7174$. 
		\item\label{thm:finite3} $|\CB(W)|\geq c\cdot \sqrt{|W|+\frac14 c^2}-\frac12 c^2$ for $c=0.6053$. 
	\end{enumerate}
	Furthermore, for all $c<\sqrt{6}-\sqrt{3}\approx 0.7174$ there exists an $R_c\in \N$ such that for all
	$r\geq R_c$ and all $W\subset V(G_r)$ with  $|W|\leq \frac{1}2 |V(G_r)|$, we have
	\begin{enumerate}\setcounter{enumi}{3}
		\item\label{thm:finite4}  $|\CN(W)|\geq c\cdot \sqrt{|W|}$ 
		\item\label{thm:finite5}  $|\CB(W)|\geq c\cdot \sqrt{|W|+\frac14 c^2}-\frac12 c^2$.
	\end{enumerate}
\end{theorem}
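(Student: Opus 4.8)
The plan is to treat the edge bound (part~\ref{thm:finite2}) first, since it enjoys a symmetry the two vertex measures lack, and then to reduce the remaining statements either to it or to the neighbor bound. For the edge bound I would compare $W$ with its complement $W^{c}:=V(G_r)\setminus W$ inside the \emph{infinite} grid. The edges counted by $\CE$ in $G_r$ are exactly those running between $W$ and $W^{c}$, while every edge leaving $V(G_r)$ has precisely one endpoint in $W$ and the other outside; hence
\begin{equation*}
|\CE_{G_\infty}(W)|+|\CE_{G_\infty}(W^{c})|=2\,|\CE_{G_r}(W)|+|\CE(V(G_r))|.
\end{equation*}
Applying Theorem~\ref{thm:infinitegrid}.\ref{lab:thm:infinitegrid2} to the two finite sets $W$ and $W^{c}$, and inserting $|\CE(V(G_r))|=6r$ and $|V(G_r)|=6r^{2}$ from Lemma~\ref{lem:NbOfVerticesOfFinHexGrid} together with the observation $6r=\sqrt6\sqrt{6r^{2}}$, this becomes
\begin{equation*}
|\CE_{G_r}(W)|\ \geq\ \tfrac{\sqrt6}{2}\bigl(\sqrt{|W|}+\sqrt{n-|W|}-\sqrt{n}\bigr),\qquad n:=6r^{2}.
\end{equation*}

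So part~\ref{thm:finite2} reduces to the purely analytic claim that $\sqrt{w}+\sqrt{n-w}-\sqrt{n}\geq(2-\sqrt2)\sqrt{w}$ for $0\leq w\leq n/2$. I would verify this by setting $g(w):=\sqrt{n-w}-\sqrt{n}-(1-\sqrt2)\sqrt{w}$ and checking that $g(0)=g(n/2)=0$ while $g''(w)<0$; a concave function vanishing at both endpoints of an interval is nonnegative on it. The double root at $w=n/2$ is exactly the source of the constant $\sqrt6-\sqrt3$ and pins down where tightness is approached, namely at $|W|=\tfrac12|V(G_r)|$.

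The boundary bounds (parts~\ref{thm:finite3} and~\ref{thm:finite5}) I would deduce from the neighbor bounds by the device already used for the infinite grid. Put $X:=W\setminus\CB(W)$; every vertex of $X$ has all its neighbors in $W$, so $\CN(X)\subseteq\CB(W)$, and since these neighbors stay inside $W\subseteq V(G_r)$ there is no loss at the grid boundary. Thus $|\CB(W)|\geq|\CN(X)|$ with $|X|\leq|W|\leq\tfrac12|V(G_r)|$, and feeding the neighbor bound for $X$ into this inequality and solving the resulting quadratic reproduces the stated form $c\sqrt{|W|+\tfrac14 c^{2}}-\tfrac12 c^{2}$ verbatim, for both the explicit and the asymptotic constant.

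This leaves the neighbor bounds (parts~\ref{thm:finite1} and~\ref{thm:finite4}), which I expect to be the hard part. The clean edge argument does not transfer: the external neighbors of $W$ and of $W^{c}$ overlap, so there is no exact additive analogue of the identity above. Instead I would rerun the row-counting of Parts~2 and~3 inside $G_r$. After eliminating bad rows one still has the geometric upper bound on $|W|$ in terms of $l_1,l_2,l_3$, but the lower bound $|\CN(W)|\geq l_1+l_2+l_3$ must be weakened to $l_1+l_2+l_3-\tfrac12 D$, where $D$ counts the outermost neighbors that now fall outside $G_r$. Controlling this loss $D$ is the crux: for sets pressed against the boundary $D$ is of order $\sqrt{|W|}$, which is precisely why the neighbor constant is genuinely below $\sqrt6$. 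Estimating $D$ uniformly in $r$ (and checking the few smallest radii directly) yields the explicit constant $0.6053$, while letting $r\to\infty$ makes the correction lower-order relative to $\sqrt{|W|}$ and recovers every $c<\sqrt6-\sqrt3$, giving the asymptotic statements.
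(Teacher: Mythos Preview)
Your argument for part~\ref{thm:finite2} is correct and is in fact cleaner than the paper's. The paper argues by contradiction: assuming $|\CE_{in}(W)|<c\sqrt{|W|}$, it applies Theorem~\ref{thm:infinitegrid}.\ref{lab:thm:infinitegrid2} separately to $W$ and to $U:=V(G_r)\setminus W$, combines the resulting inequalities via $\CE_{in}(U)=\CE_{in}(W)$ and $|\CE_{out}(W)|+|\CE_{out}(U)|\le 6r$, and after a page of algebra arrives at a contradiction for $c=\sqrt6-\sqrt3$. Your additive identity packages the same two applications of the infinite bound into a single line, and the concavity check replaces the algebra. Your reduction of parts~\ref{thm:finite3} and~\ref{thm:finite5} to the neighbor bounds via $X=W\setminus\CB(W)$ is exactly what the paper does.

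The gap is in parts~\ref{thm:finite1} and~\ref{thm:finite4}. Your plan to rerun the row-counting inside $G_r$ with a defect $D$ is not a proof as it stands: the bad-row elimination of Part~1 need not keep $W$ inside $G_r$, and you give no mechanism for bounding $D$, only the assertion that doing so ``yields the explicit constant $0.6053$''. There is also an internal inconsistency: you say $D$ is of order $\sqrt{|W|}$ for sets pressed against the boundary, yet claim the correction becomes lower order as $r\to\infty$; for $|W|$ near $3r^2$ both scales are $\sim r$, so this does not follow. The paper avoids row-counting in the finite case entirely. It mirrors the edge argument but with the asymmetry handled as follows: set $U:=V(G_r)\setminus(W\cup\CN_{in}(W))$, observe that $\CN_{in}(U)\subseteq\CN_{in}(W)$ and that $\CN_{out}(W)$, $\CN_{out}(U)$ are disjoint subsets of $\CN(V(G_r))$, apply Theorem~\ref{thm:infinitegrid}.\ref{lab:thm:infinitegrid1} to both $W$ and $U$, and chase the inequalities to a bound $r<f(c)$ with $f(c)=\dfrac{\sqrt3\,c}{-2\sqrt3(c+\sqrt3)(c-\sqrt6+\sqrt3)}$. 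The value $c=0.6053$ is then read off from $f(c)\le 2$ (the cases $r\le1$ being trivial), and part~\ref{thm:finite4} follows by taking $R_c=\lceil f(c)\rceil$ for any $c<\sqrt6-\sqrt3$.
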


\begin{proof}[Proof of Theorem~\ref{thm:finite}.\ref{thm:finite1} and~\ref{thm:finite}.\ref{thm:finite4}]
Let $G_\infty$ be the infinite hexagonal grid and the subgraph $G_r$ be the finite hexagonal grid of radius~$r\geq0$. 
For a subset $Z\subseteq V(G_r)$ let us denote the set of neighbors of $Z$ 
that lie inside $G_r$ with $\CN_{in}(Z)$, and all neighbors of $Z$ 
outside of $G_r$ with $\CN_{out}(Z)$.
Let $W$ be a subset of $V(G_r)$ with $|W|\leq\frac12|V(G_r)|$,
and let $r\geq 2$, as for $r\in\{0,1\}$ Theorem~\ref{thm:finite}.\ref{thm:finite1} trivially holds.
In order to show Theorem~\ref{thm:finite}.\ref{thm:finite1}, we assume
that $\CN_{in}(W)$ is small, that is,
\begin{equation}
\label{equ:1assumption}
 |\CN_{in}(W)|<c\cdot \sqrt{|W|},
\end{equation}
 for $c=0.6053$, and deduce a contradiction by exploiting the results in 
Theorem~\ref{thm:infinitegrid}.\ref{lab:thm:infinitegrid1} and 
Lemma~\ref{lem:NbOfVerticesOfFinHexGrid}.

We define $U:=V(G_r)\setminus(W\cup\CN_{in}(W))$ (see Figure~\ref{fig:finitea}), and by Lemma~\ref{lem:NbOfVerticesOfFinHexGrid}
and (\ref{equ:1assumption}) we obtain
a lower bound for the number of vertices in $U$:
\begin{eqnarray}
\nonumber|U|\quad =&|(V(G_r)|-|W|-|N_{in}(W)|\\
\label{eqn:2U} >&\quad\quad\ 6r^2 -|W|-c\cdot \sqrt{|W|}.
\end{eqnarray}

Let us now consider  $\CN_{out}(W)$, the set of neighbors of $W$ that lie outside of $G_r$ (see Figure~\ref{fig:finiteb}).
By (\ref{equ:1assumption}), the assumption that $\CN_{in}(W)$ is small, and Theorem~\ref{thm:infinitegrid}.\ref{lab:thm:infinitegrid1} 
we obtain that  $\CN_{out}(W)$ is quite large:
\begin{align}
\nonumber|N_{out}(W)|\quad=&\quad\ \ |N(W)|-|N_{in}(W)|\\
\label{eqn:3outneighW} >&\quad\sqrt{6}\sqrt{|W|}-c\sqrt{|W|}.
\end{align}

Next, we consider $\CN_{out}(U)$, the neighbors of $U$ that lay outside of $G_r$ (see Figure~\ref{fig:finitec}).
As $U$ and $W$ are disjoint and there are no neighbors of $V(G_r)$ that are connected to more 
than one vertex of the grid $G_r$, the sets  $\CN_{out}(U)$ and $\CN_{out}(W)$ are disjoint.
Thus, it follows that $N_{out}(U)$ is rather small:
\begin{align}
\nonumber|N_{out}(U)|\quad\leq&\quad |N(G_r)|-|N_{out}(W)|\\
\label{eqn:4outneighU}<&\quad\quad\quad\  6r-(\sqrt{6}-c)\sqrt{|W|},
\end{align}
where the last inequality follows from Lemma~\ref{lem:NbOfVerticesOfFinHexGrid} and (\ref{eqn:3outneighW}).

Finally, we use Theorem~\ref{thm:infinitegrid}.\ref{lab:thm:infinitegrid1} and (\ref{eqn:4outneighU}) 
to show that $N_{in}(U)$, the set of neighbors of $U$ that lie inside $G_r$ (see Figure~\ref{fig:finited}),
is again rather large,
and we use~(\ref{eqn:2U}) to obtain a bound on $|N_{in}(U)|$ that depends only on $c$, $r$ and $|W|$:
\begin{align}
\nonumber  |N_{in}(U)|\quad =&\quad\ \ |N(U)|-|N_{out}(U)|\\
\label{eqn:5ainneighU}  \geq&\quad \sqrt{6}\sqrt{|U|}-6r+(\sqrt{6}-c)\sqrt{|W|}\\
\label{eqn:5inneighU}  \geq&\quad \sqrt{6}\sqrt{6r^2-|W|-c\cdot \sqrt{|W|}}-6r+(\sqrt{6}-c)\sqrt{|W|}.
\end{align}
Since $|W|\leq 3r^2$, we note that  $6r^2-|W|-c\cdot \sqrt{|W|}\geq 0$ for $c\leq \frac{6}{\sqrt{3}}\approx 3.4641$.

\vspace{0.3cm}
\begin{figure}[ht]
\centering
\begin{minipage}{1\textwidth}
\subfloat[]{\label{fig:finitea}\includegraphics[totalheight=3.1cm]{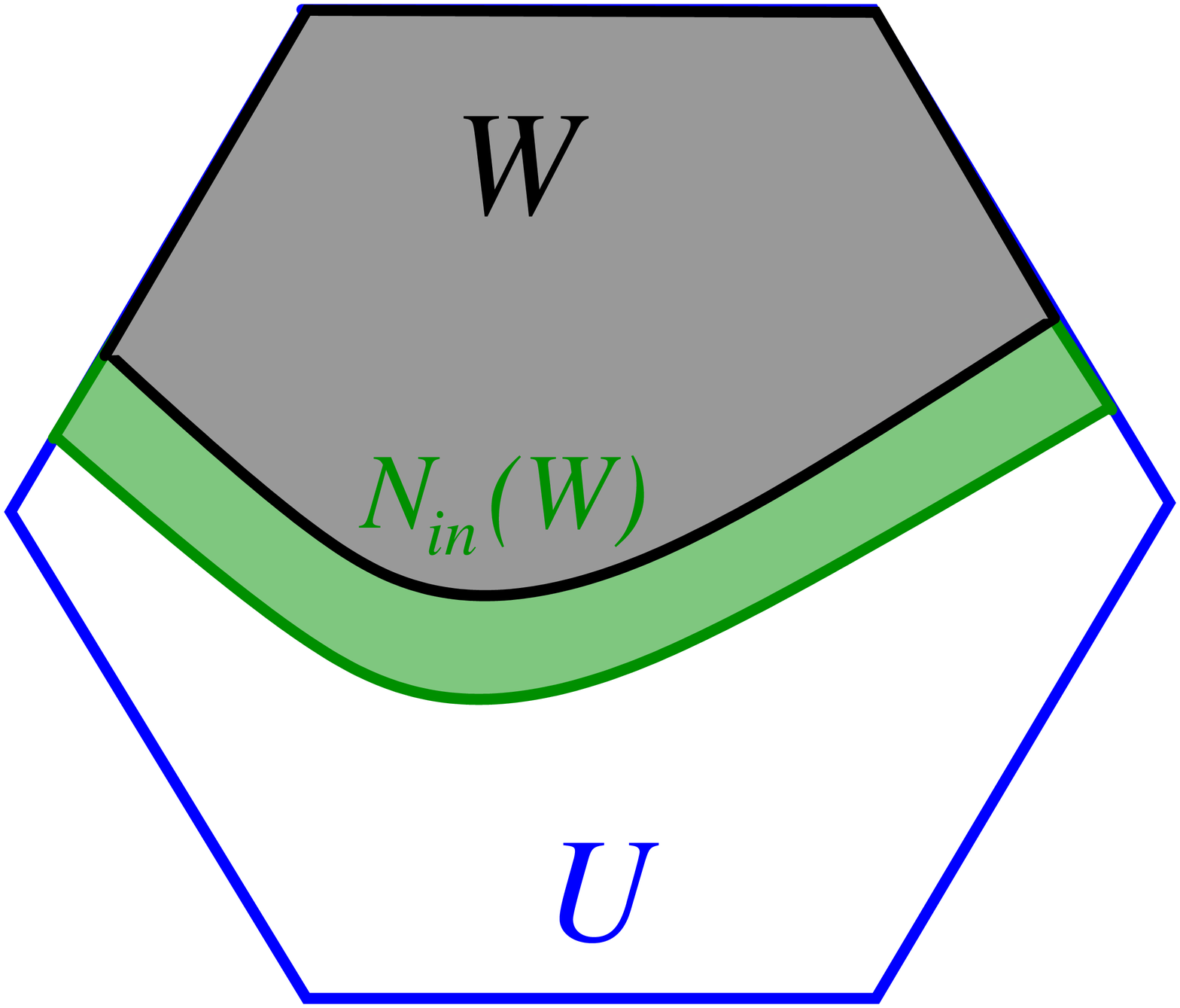}}\hfill
\subfloat[]{\label{fig:finiteb}\includegraphics[totalheight=3.1cm]{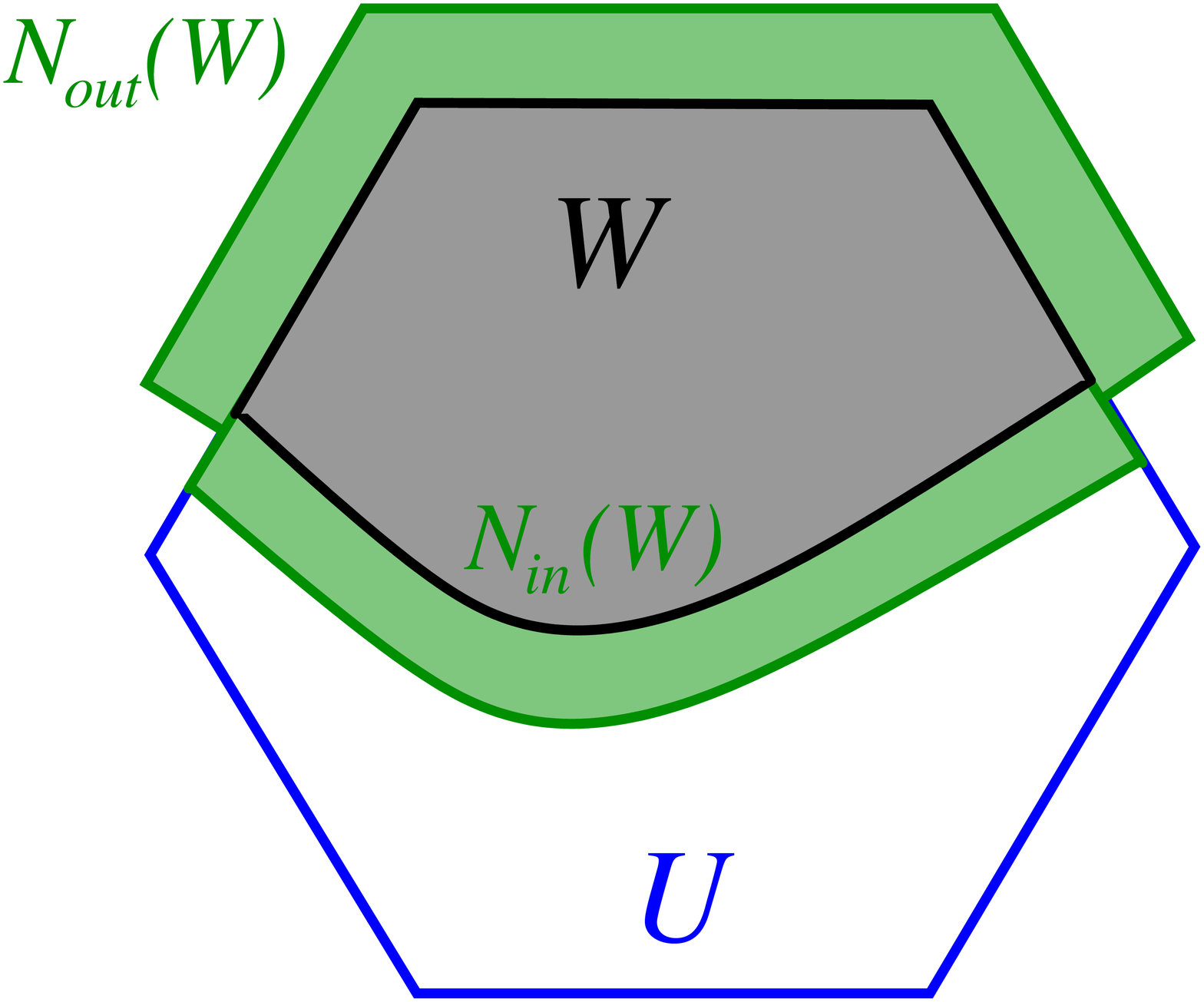}}\hfill
\subfloat[]{\label{fig:finitec}\includegraphics[totalheight=3.1cm]{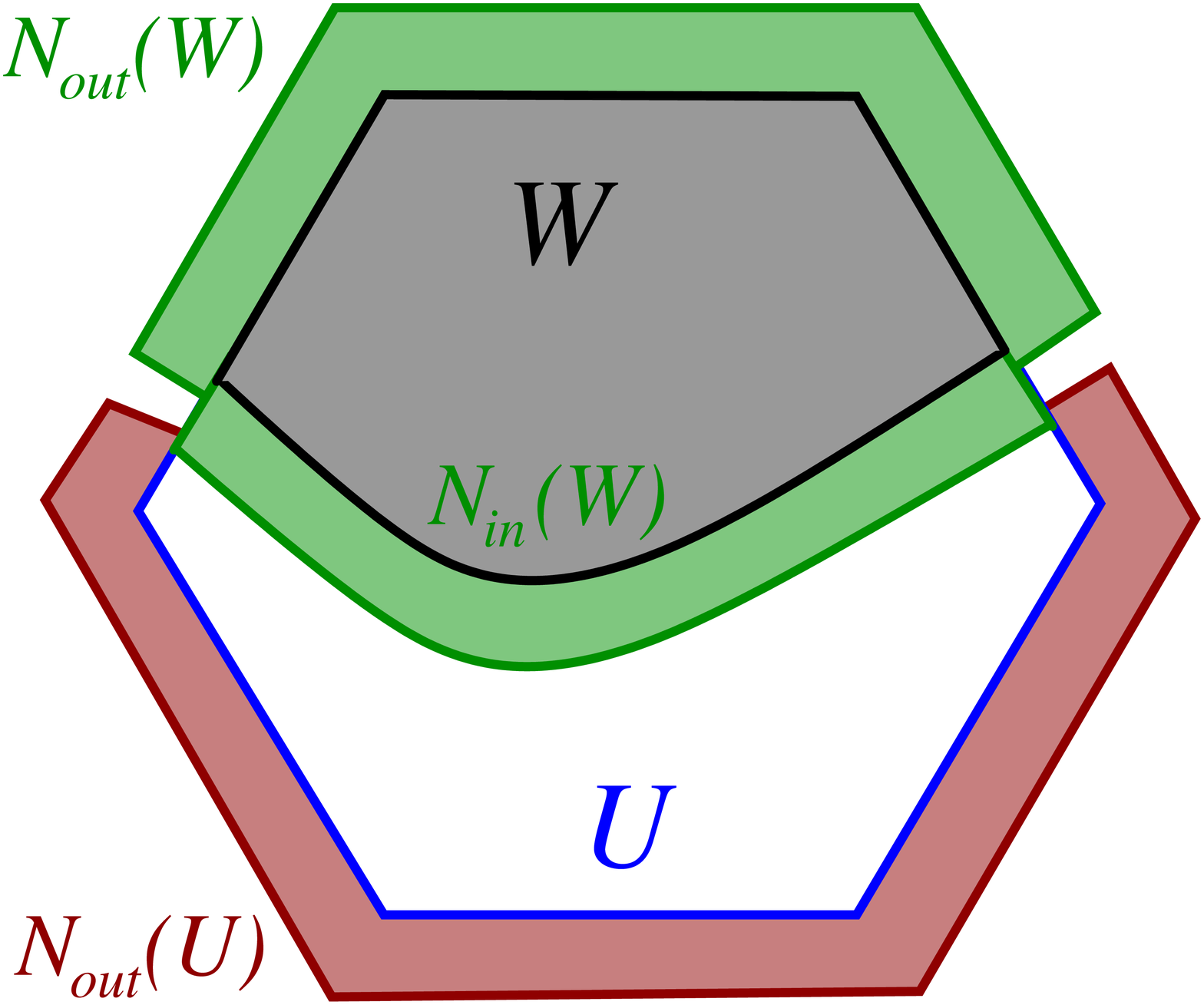}}\hfill
\subfloat[]{\label{fig:finited}\includegraphics[totalheight=3.1cm]{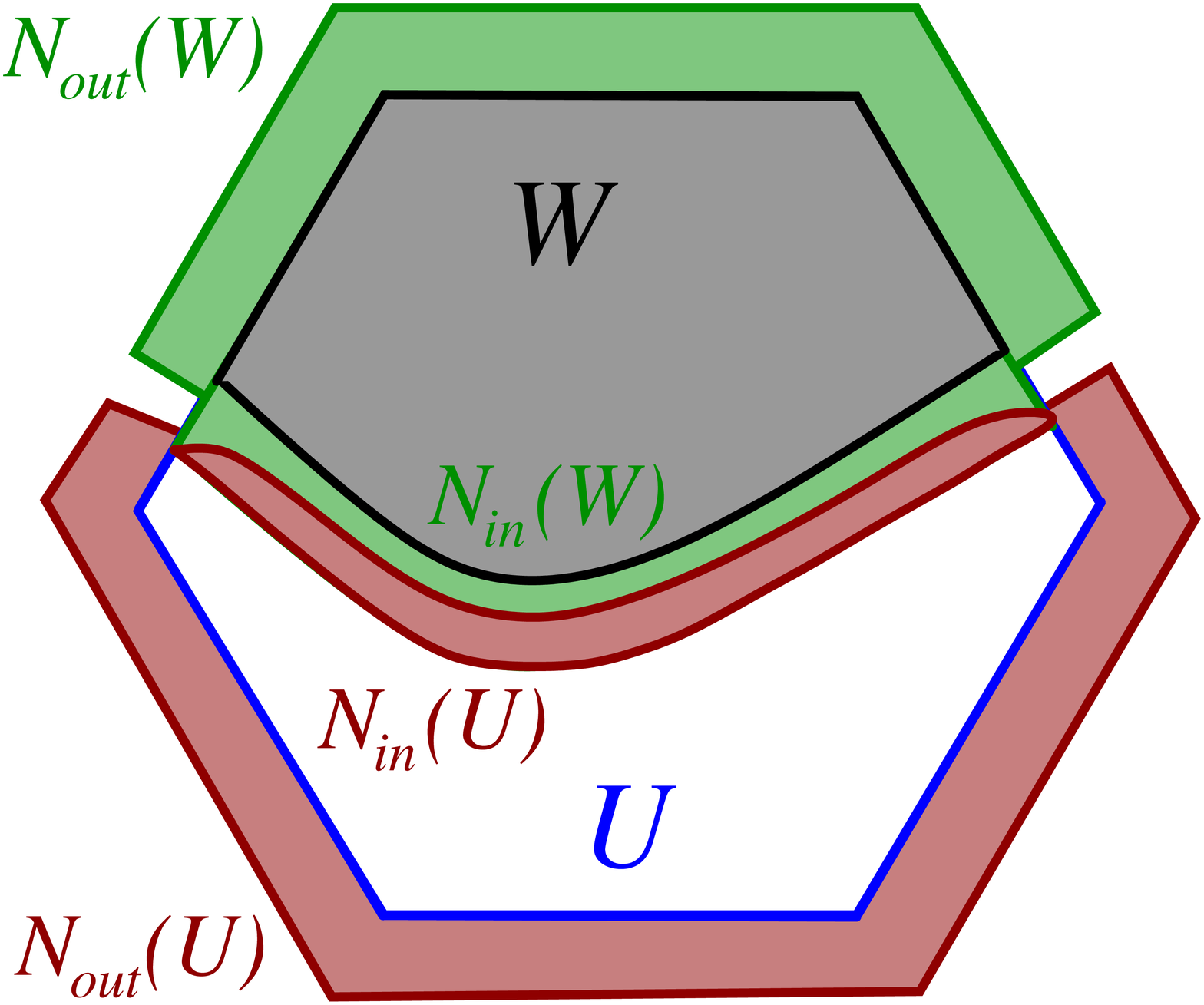}}
\end{minipage}
\caption{Vertex neighbors in the finite grid $G_r$}
\label{fig:finitegridproof}
\end{figure}

It is easy to see that every neighbor of $U$ belongs to $\CN_{in}(W)$. 
Thus, $\CN_{in}(U)\subseteq \CN_{in}(W)$, and we can derive the following inequality using~(\ref{equ:1assumption}) and~(\ref{eqn:5inneighU}): 
\begin{align}
\nonumber c\cdot \sqrt{|W|}> |N_{in}(W)|  \geq&\ |N_{in}(U)|\\
\label{eqn:6inequality}\geq&\ \sqrt{6}\cdot \sqrt{6r^2-|W|-c\cdot \sqrt{|W|}}-6r+(\sqrt{6}-c)\sqrt{|W|}   
\end{align}

We can transform the inequality in~(\ref{eqn:6inequality}) the following way:
\begin{align*}
&&    c\cdot \sqrt{|W|}\quad >&\quad\sqrt{6}\cdot \sqrt{6r^2-|W|-c\cdot \sqrt{|W|}}-6r+(\sqrt{6}-c)\sqrt{|W|} \\
\Longrightarrow&& 
(2c-\sqrt{6})\cdot \sqrt{|W|}+6r\quad >&\quad\sqrt{6}\cdot \sqrt{6r^2-|W|-c\cdot \sqrt{|W|}} 
\end{align*}

Since the right side of the inequality is at least 0, we can sqare both sides of the inequality:
\begin{align}
\nonumber\Longrightarrow&& 
(2c-\sqrt{6})^2\cdot |W|+12(2c-\sqrt{6})r\sqrt{|W|}+ 36r^2\quad >&\quad 6\cdot (6r^2-|W|-c\cdot \sqrt{|W|}) \\
\nonumber\Longrightarrow&& 
(4c^2-4\sqrt{6}c+12)\cdot  \sqrt{|W|}\quad >&\quad -6c-(24c-12\sqrt{6})r \\
\nonumber\Longrightarrow&&    \frac{-4c^2+4\sqrt{6}c-12}{24c-12\sqrt{6}}\sqrt{|W|}\quad>&\quad\frac{6c}{24c-12\sqrt{6}}+r\\
\intertext{The last implication holds because $-(24c-12\sqrt{6})> 0 \iff c<\frac12 \sqrt{6}\approx 1.2247$.\newline
As $(-4c^2+4\sqrt{6}c-12)<0$ for all $c\in \R$, $(24c-12\sqrt{6})< 0$ for $c<\frac12 \sqrt{6}$ and $|W|\leq3r^2$, we obtain}
\nonumber\Longrightarrow&&    \frac{-4c^2+4\sqrt{6}c-12}{24c-12\sqrt{6}}\sqrt{3r^2}\quad>&\quad\frac{6c}{24c-12\sqrt{6}}+r\\
\nonumber\Longrightarrow&&    \frac{-4\sqrt{3}c^2+12\sqrt{2}c-12\sqrt{3}-24c+12\sqrt{6}}{24c-12\sqrt{6}}r\quad>&\quad\frac{6c}{24c-12\sqrt{6}}\\
\intertext{Since  $(24c-12\sqrt{6})< 0$ for $c<\frac12 \sqrt{6}$:}
\nonumber\Longrightarrow&&    (-4\sqrt{3}c^2+12\sqrt{2}c-12\sqrt{3}-24c+12\sqrt{6})\ r\quad<&\quad 6c\\
\nonumber\Longrightarrow&&    -4\sqrt{3}\left(c+\sqrt{3}\right)\left(c-\sqrt{6}+\sqrt{3}\right)\ r\quad<&\quad 6c\\
\intertext{Thus, for $-\sqrt{3}<c<\sqrt{6}-\sqrt{3}$, it follows that}
\label{equ:functionf}\Longrightarrow&&    r\quad<\quad \frac{\sqrt{3}c}{-2\sqrt{3}\left(c+\sqrt{3}\right)\left(c-\sqrt{6}+\sqrt{3}\right)}.&
\end{align}
Figure~\ref{fig:c-r} depicts the graph of the function $f(c)= \frac{\sqrt{3}c}{-2\sqrt{3}\left(c+\sqrt{3}\right)\left(c-\sqrt{6}+\sqrt{3}\right)}$.

For $c=0.6053$, we have $f(c)\leq2$. Therefore, the inequality in (\ref{equ:functionf}) is not satisfied, as we assumed $r\geq 2$.
Hence, we obtain a contradiction.

Further,  for all $0\leq c<\sqrt{6}-\sqrt{3}\approx 0.7174$ we can set $R_c:=f(c)$ and  conclude that 
for all $r\geq \lceil R_c\rceil$ and all $W\subset V(G_r)$ with $|W|\leq 3r^2$ we have
$|\CN(W)|\geq c\cdot \sqrt{|W|}$. 
%
\begin{figure}[ht]
\centering
\begin{minipage}{0.75\textwidth}\centering
\includegraphics[totalheight=6.0cm]{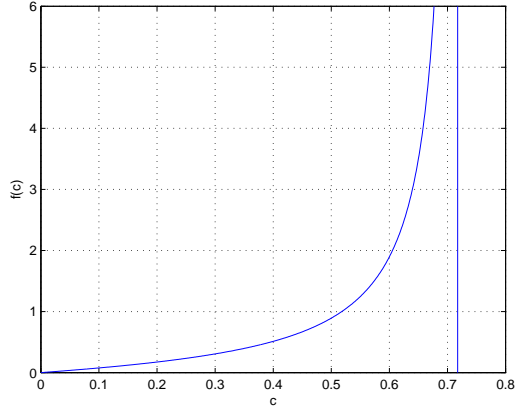}
\caption{The graph of function $f(c)= \frac{\sqrt{3}c}{-2\sqrt{3}\left(c+\sqrt{3}\right)\left(c-\sqrt{6}+\sqrt{3}\right)}$,
which gives for each ${0\leq c<\sqrt{6}-\sqrt{3}}$ a lower bound $R_c$ on the radius}
\label{fig:c-r}
\end{minipage}
\end{figure}
\end{proof}
\begin{proof}[Proof of Theorem~\ref{thm:finite}.\ref{thm:finite2}]
In principle the isoperimetric inequality in  Theorem~\ref{thm:finite}.\ref{thm:finite2} can be derived as the one in 
  Theorem~\ref{thm:finite}.\ref{thm:finite1}.
Again, let $G_\infty$ be the infinite hexagonal grid and the subgraph $G_r$ be the finite hexagonal grid with radius~$r\geq0$. 
For a set $Z$ of vertices, we denote the set of outgoing edges of $Z$ that lie inside $G_r$ with $\CE_{in}(Z)$,
the set of outgoing edges of $Z$ that are incident to a vertex outside of $G_r$ with $\CE_{out}(Z)$, 
and for $c=\sqrt{6}-\sqrt{3}$ we assume
\begin{equation}
\label{equ:E1assumption}
 |\CE(W)|<c\cdot \sqrt{|W|}.
\end{equation}
This time, we define $U:=V(G_r)\setminus W$.
Then
\begin{align}
\nonumber|U|\quad =&\quad|(V(G_r)|-|W|\\
\nonumber>&\quad\quad\quad\ 6r^2 -|W|.
\end{align}
For (\ref{eqn:3outneighW})--(\ref{eqn:5ainneighU}) we get analogous inequalities regarding the number of outgoing edges,
and as $\CE_{in}(W)=\CE_{in}(U)$ we obtain
\begin{align}
\nonumber&& c\cdot \sqrt{|W|}\quad >&\quad \sqrt{6}\cdot \sqrt{6r^2-|W|}-6r+(\sqrt{6}-c)\sqrt{|W|} \\ 
\nonumber\Longrightarrow&& (2c-\sqrt{6})\cdot \sqrt{|W|} +6r\quad >&\quad \sqrt{36r^2-6|W|} \\ 
\nonumber\Longrightarrow&& (2c-\sqrt{6})^2|W|+12(2c-\sqrt{6})r \sqrt{|W|} +36r^2\quad >&\quad 36r^2-6|W| \\ 
\nonumber\Longrightarrow&& (24c-12\sqrt{6})r  \quad >&\quad   (-4c^2+4c\sqrt{6}-12) \sqrt{|W|}\\ 
\intertext{As $(-4c^2+4c\sqrt{6}-12)<0$ for all $c$ and $|W|\leq 3r^2$, we get}
\nonumber\Longrightarrow&& (24c-12\sqrt{6})r \quad >&\quad   (-4c^2+4c\sqrt{6}-12)\sqrt{3}r \\
\label{eqn:th22inequal}\Longrightarrow&& \frac{(24c-12\sqrt{6})}{(-4c^2+4c\sqrt{6}-12)\sqrt{3}} \quad <&\quad  1 
\end{align}

The graph of function $g(c)=\frac{(24c-12\sqrt{6})}{(-4c^2+4c\sqrt{6}-12)\sqrt{3}} $ can be found in Figure~\ref{fig:matlab2}.


For  $c=\sqrt{6}-\sqrt{3}\approx 0.7174$ we obtain $g(c)=1$. Consequently, the inequality in  (\ref{eqn:th22inequal})
is not satisfied, and we have a contradiction to  (\ref{equ:E1assumption}).
\begin{figure}[ht]
\centering
\begin{minipage}{0.75\textwidth}\centering
\includegraphics[totalheight=6.0cm]{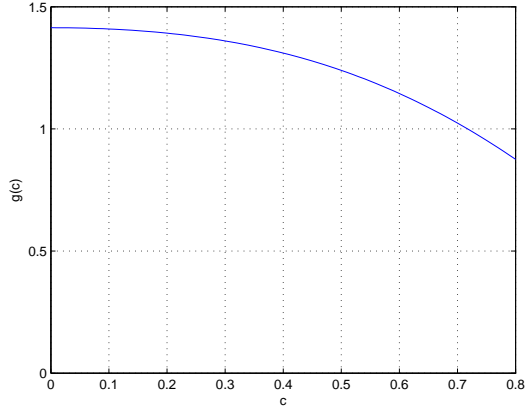}
\caption{The graph of function $g(c)=\frac{(24c-12\sqrt{6})}{(-4c^2+4c\sqrt{6}-12)\sqrt{3}} $}
\label{fig:matlab2}
\end{minipage}
\end{figure}
\end{proof}

\begin{proof}[Proof of Theorem~\ref{thm:finite}.\ref{thm:finite3} and~\ref{thm:finite}.\ref{thm:finite5}]
Analogous to the proof of the inequality in Theorem~\ref{thm:infinitegrid}.\ref{lab:thm:infinitegrid3},
we can deduce Theorem~\ref{thm:finite}.\ref{thm:finite3} and~\ref{thm:finite}.\ref{thm:finite5} 
from Theorem~\ref{thm:finite}.\ref{thm:finite1} and~\ref{thm:finite}.\ref{thm:finite4}, respectively.
\end{proof}

\section{Concluding Remarks}
It is quite reasonable to assume, that the constants $c$ in Theorem~\ref{thm:finite} are not tight. 
That is why, we pose the following conjecture: 
\begin{conjecture}
For any finite subset $W\subseteq V(G_r)$ with $|W|\leq 3r^2$ we have
	\begin{itemize}
		\item $|N(W)|,|\CE(W)|\geq c\cdot \sqrt{|W|}$ for $c=\frac2{\sqrt{3}}\approx 1.1547$.
	\end{itemize}
\end{conjecture}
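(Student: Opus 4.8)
The plan is to first pin down the conjectured extremal configurations, since the constant $\frac{2}{\sqrt3}$ strongly suggests what they are. Cutting the single hexagon $G_1$ in half by a straight line parallel to one of its three pairs of opposite sides severs exactly two (opposite, parallel) side-edges and leaves three vertices on each side, so that both $|\CN(W)|$ and $|\CE(W)|$ equal $2$ while $|W|=3$; this already realises the ratio $\frac{|\CE(W)|}{\sqrt{|W|}}=\frac{2}{\sqrt3}$. More generally I would conjecture the extremal sets to be the straight ``half-grid'' cuts of $G_r$ parallel to a pair of opposite sides: such a set has area $3r^2=\frac12|V(G_r)|$ and, when aligned with a gap between two bands of rows, a single straight internal cut, which for the balanced cut should sever $2r$ edges, so that $|\CN(W)|=|\CE(W)|=2r=\frac{2}{\sqrt3}\sqrt{3r^2}$. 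This explains two features of the conjecture at once: why the constant is the same for the neighbour and the edge version (along such a cut every severed edge meets a distinct neighbour, since each vertex just above the cut carries exactly one crossing edge, whence $|\CN(W)|=|\CE(W)|$), and why it is so much smaller than the $\sqrt6$ of Theorem~\ref{thm:infinitegrid} (the extremal set offloads a constant fraction of its perimeter ``for free'' onto three of the six sides of $G_r$, a saving made possible precisely by the hypothesis $|W|\le\frac12|V(G_r)|$).

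With the target shape identified, I would prove the inequality by a normalisation argument in the spirit of Part~1 of the proof of Theorem~\ref{thm:infinitegrid}.\ref{lab:thm:infinitegrid1}, but carried out inside $G_r$. First I would show that, without increasing $|\CE(W)|$ (resp.\ $|\CN(W)|$) and without changing $|W|$, one may push $W$ against $\partial G_r$ and eliminate internal white rows, so that $W$ becomes a boundary-supported region with no bad rows; the half-size hypothesis guarantees that these shifts never force $W$ to wrap around and swallow $G_r$. For such canonical sets I would then bound the perimeter from below by counting gray rows exactly as in Part~2 (inequality~(\ref{equ:neighborvertices})), now separating the perimeter into the part lying on $\partial G_r$, which is controlled by Lemma~\ref{lem:NbOfVerticesOfFinHexGrid}, and the genuinely internal part, which is what the inequality must bound. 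Finally I would combine the gray-row count with the area bound~(\ref{equ:cardinality}) to recover $\frac{2}{\sqrt3}$; the reduction from the infinite-grid constant $\sqrt6$ reflects exactly the fact that a canonical set offloads a constant fraction of its perimeter onto the boundary.

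I would stress that merely sharpening the constants in the authors' existing contradiction argument cannot suffice: that argument bounds $|\CN_{out}(W)|$ by the full boundary count $6r$, and, as the functions $f$ and $g$ in Figures~\ref{fig:c-r} and~\ref{fig:matlab2} show, its asymptotic ceiling is exactly $\sqrt6-\sqrt3<\frac{2}{\sqrt3}$. A genuinely new ingredient is therefore required, and the natural one is the normalisation above, together with the observation that $W$ and its complement $U=V(G_r)\setminus W$ must share the $6r$ outer neighbours of $G_r$ in a balanced way. The hard part will be the normalisation step itself: unlike in the infinite grid, the row-shifts of Part~1 must keep $W$ inside $G_r$ and interact with all three orientations of the hexagonal boundary and with its $120^\circ$ corners, so proving that compression never increases the internal perimeter — i.e.\ that the straight half-cut is optimal among all boundary-adjacent shapes — is the crux. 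Establishing this discrete symmetrisation rigorously and uniformly in $r$ is precisely the gap that keeps the statement a conjecture.
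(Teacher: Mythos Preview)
The paper contains no proof of this statement: it is explicitly posed as a \emph{conjecture} in the concluding remarks, with no argument offered beyond the observation that the constants in Theorem~\ref{thm:finite} are unlikely to be tight. There is therefore nothing to compare your proposal against.

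Your proposal is not a proof either, and you say as much in your final sentence. What you have written is a plausible heuristic programme: you correctly identify the likely extremal configurations (the straight half-cuts of $G_r$, which do realise the ratio $\tfrac{2}{\sqrt3}$, as your $G_1$ computation already shows), and you correctly observe that the contradiction method behind Theorem~\ref{thm:finite} saturates at $\sqrt6-\sqrt3$ and so cannot be pushed to $\tfrac{2}{\sqrt3}$ by mere bookkeeping. The compression-inside-$G_r$ strategy you sketch is the natural thing to try, but the step you flag as ``the crux'' --- showing that row-shifts confined to $G_r$ and interacting with its hexagonal boundary never increase the internal perimeter --- is genuinely unproved here, not merely unwritten. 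Until that symmetrisation lemma is established, the argument remains a plan rather than a proof, which is consistent with the statement's status as an open conjecture in the paper.
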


%

\bibliography{grids}

\begin{thebibliography}{10}

\bibitem{bernstein}
A.~J. Bernstein.
\newblock Maximally connected arrays on the $n$-cube.
\newblock {\em SIAM J. Appl. Math.}, 15:1485--1489, 1967.

\bibitem{bollobasleader-vertex}
B.~Bollob{\'a}s and I.~Leader.
\newblock Compressions and isoperimetric inequalities.
\newblock {\em J. Comb. Theory, Ser. A}, 56(1):47--62, 1991.

\bibitem{bollobasleader-edge}
B.~Bollob{\'a}s and I.~Leader.
\newblock Edge-isoperimetric inequalities in the grid.
\newblock {\em Combinatorica}, 11(4):299--314, 1991.

\bibitem{gravier}
S.~Gravier.
\newblock Tilings and isoperimetrical shapes. ii. hexagonal lattice.
\newblock {\em Acta Universitatis Palackianae Olomucensis. Facultas Rerum
  Naturalium. Mathematica}, 40(1):79--92, 2001.

\bibitem{harper-edge}
L.~H. Harper.
\newblock Optimal assignment of numbers to vertices.
\newblock {\em SIAM J. Appl. Math.}, 12:113--135, 1964.

\bibitem{harper-vertex}
L.~H. Harper.
\newblock Optimal numberings and isoperimetric problems on graphs.
\newblock {\em J. Comb. Theory}, 1:385--393, 1966.

\bibitem{hart}
S.~Hart.
\newblock A note on the edges of the $n$-cube.
\newblock {\em Discrete Math.}, 14:157--163, 1976.

\bibitem{lindsey}
J.~H. Lindsey.
\newblock Assignment of numbers to vertices.
\newblock {\em Amer. Math. Monthly}, 71:508--516, 1964.

\bibitem{wangwang-discrete}
D.-L. Wang and P.~Wang.
\newblock Discrete isoperimetric problems.
\newblock {\em SIAM J. Appl. Math.}, 32(4):860--870, 1977.

\bibitem{wangwang-extremal}
D.-L. Wang and P.~Wang.
\newblock Extremal configurations on a discrete torus and a generalization of
  the generalized macaulay theorem.
\newblock {\em SIAM J. Appl. Math.}, 32(4):860--870, 1977.

\end{thebibliography}
\bibliographystyle{abbrv}

\end{document}